\newlist{paraenum}{enumerate}{1}%
\setlist[paraenum]{label={(\arabic*)}}%
\newlist{lemenum}{enumerate}{1}%
\setlist[lemenum]{label={\emph{(\arabic*)}}}%
\definecolor{mygreen}{rgb}{0, 0.7, 0}
\definecolor{myblue}{rgb}{0, 0, 0.7}
\definecolor{mypurple}{RGB}{100, 0, 150}
  \theoremstyle{plain}
  \newtheorem{thm}{Theorem}[section]
  \theoremstyle{definition}
  \theoremstyle{definition}
  \newtheorem{war}[thm]{Warning}
  \theoremstyle{plain}
  \newtheorem{notation}[thm]{Notation}  
  \theoremstyle{plain}
  \newtheorem{lem}[thm]{\protect\lemmaname}
  \theoremstyle{definition}
  \newtheorem{defn}[thm]{\protect\definitionname}
  \theoremstyle{plain}
  \newtheorem{prop}[thm]{\protect\propositionname}
  \theoremstyle{plain}
  \newtheorem{cor}[thm]{\protect\corollaryname}
  \theoremstyle{definition}
  \newtheorem{ex}[thm]{\protect\examplename}
  \theoremstyle{remark}
  \newtheorem{rem}[thm]{\protect\remarkname}
  \theoremstyle{remark}
  \newtheorem*{rem*}{\protect\remarkname}
  \providecommand{\corollaryname}{Corollary}
  \providecommand{\definitionname}{Definition}
  \providecommand{\examplename}{Example}
  \providecommand{\lemmaname}{Lemma}
  \providecommand{\propositionname}{Proposition}
  \providecommand{\remarkname}{Remark}
\begin{document}
\global\long\def\oto#1{\overset{#1}{\longrightarrow}}
\global\long\def\iso{\overset{\sim}{\longrightarrow}}
\global\long\def\into{\hookrightarrow}
\global\long\def\onto{\twoheadrightarrow}
\global\long\def\ss{\subseteq}
\global\long\def\adj{\leftrightarrows}
\global\long\def\bb#1{\mathbb{#1}}
\global\long\def\es{\varnothing}
\global\long\def\term{\text{pt}}
\global\long\def\cone{\triangleright}
\global\long\def\from{\leftarrow}

\global\long\def\colim{\operatorname*{colim}}
\global\long\def\lim{\operatorname*{lim}}
\global\long\def\map{\operatorname{Map}}
\global\long\def\End{\operatorname{End}}
\global\long\def\fun{\operatorname{Fun}}
\global\long\def\mul{\operatorname{Mul}}
\global\long\def\Id{\operatorname{Id}}
\global\long\def\red{\operatorname{\scriptsize{red}}}
\global\long\def\un{\operatorname{\scriptsize{un}}}
\global\long\def\final{\operatorname{\scriptsize{final}}}
\global\long\def\act{\operatorname{\scriptsize{act}}}
\global\long\def\grp{\operatorname{\scriptsize{grp}}}

\global\long\def\hocolim{\operatorname*{hocolim}}
\global\long\def\holim{\operatorname*{holim}}
\global\long\def\alg{\operatorname{Alg}}

\global\long\def\LP#1#2#3#4{\xymatrix{#1\ar[r]\ar[d]  &  #3\ar[d]\\
 #2\ar[r]\ar@{-->}[ru]  &  #4 
}
 }
\global\long\def\Square#1#2#3#4{\xymatrix{#1\ar[r]\ar[d]  &  #3\ar[d]\\
 #2\ar[r]  &  #4 
}
 }

\global\long\def\Rect#1#2#3#4#5#6{\xymatrix{#1\ar[r]\ar[d]  &  #3\ar[d]\ar[r]  &  #5\ar[d]\\
 #2\ar[r]  &  #4\ar[r]  &  #6 
}
 }

\global\long\def\catd{\mathbf{Cat}_{d}}
\global\long\def\cat{\mathbf{Cat}_{\infty}}
\global\long\def\catpt{\mathbf{Cat}_{\infty,*}}
\global\long\def\op{\mathbf{Op}_{\infty}}
\global\long\def\oppt{\mathbf{Op}_{\infty,*}}
\global\long\def\opred{\mathbf{Op}_{\infty}^{\red}}
\global\long\def\opun{\mathbf{Op}_{\infty}^{\un}}
\global\long\def\opunpt{\mathbf{Op}_{\infty,*}^{\un}}
\global\long\def\opredpt{\mathbf{Op}_{\infty,*}^{\red}}
\global\long\def\opd{\mathbf{Op}_{d}}
\global\long\def\pop{\mathbf{POp}_{\infty}}
\global\long\def\poppt{\mathbf{POp}_{\infty,*}}

\global\long\def\sseq{\mathbf{SSeq}}
\global\long\def\fin{\mathbf{Fin}}
\global\long\def\set{\mathbf{Set}}
\global\long\def\sset{\mathbf{sSet}}
\global\long\def\finpt{\mathbf{Fin}_{*}}
\global\long\def\uni{\mathbf{Uni}}
\global\long\def\com{\mathbf{Com}}
\global\long\def\ass{\mathbf{Ass}}
\global\long\def\triv{\mathbf{Triv}}

\title{On $d$-Categories and $d$-Operads}

\author{Tomer M. Schlank and Lior Yanovski}
\maketitle

\begin{abstract}
We extend the theory of $d$-categories, by providing an explicit description of the right mapping spaces of the $d$-homotopy category of an $\infty$-category. Using this description, we deduce an invariant $\infty$-categorical characterization of the $d$-homotopy category. We then proceed to develop an analogous theory of $d$-operads, which model $\infty$-operads with $(d-1)$-truncated multi-mapping spaces, and prove analogous results for them. 

\end{abstract}

\tableofcontents

\section{Introduction}

\paragraph{Overview \& Organization.}
The notion of a \emph{$d$-category} was introduced by Lurie in \cite[2.3.4]{HTT}, as a strict model for what we call an \emph{essentially $d$-category}; 
An $\infty$-category all of whose mapping spaces are homotopically $(d-1)$-truncated.
With any $\infty$-category $\mathcal{C}$, Lurie associates a $d$-category $h_d\mathcal{C}$, which we call the \emph{$d$-homotopy category} of $\mathcal{C}$. While this $d$-category is shown to be universal in the 1-categorical (simplicially enriched) sense among $d$-categories that $\mathcal{C}$ is mapped to \cite[2.3.4.12]{HTT}, the question of how does $h_d\mathcal{C}$ relate to $\mathcal{C}$ as an $\infty$-category, is left  unaddressed. The goal of this note is to fill this gap and to give an analogous treatment for operads. 

In section 2, we begin by showing that the right mapping spaces of $h_d\mathcal{C}$ are given, upto isomorphism, by applying $h_{d-1}$ to the right mapping spaces of $\mathcal{C}$ (\propref{h_d_mapping_spaces_cat}). 
This is the main technical result of this note, the proof of which goes through the comparison with the ``middle mapping spaces''. From this we deduce that $h_d\mathcal{C}$ is obtained from $\mathcal{C}$ by $(d-1)$-truncation of the mapping spaces. More precisely, we show that $h_d$ can be promoted to a functor of $\infty$-categories, which is left adjoint to the inclusion of the full subcategory spanned by essentially $d$-categories into $\cat$. And furthermore, that the unit map of this adjunction is essentially surjective and is given on mapping spaces by the $(d-1)$-truncation map (\thmref{d-homotopy_category}). 

In section 3, we develop a parallel theory for operads. We call an $\infty$-operad an \emph{essentially $d$-operad} if all of its multi-mapping spaces are $(d-1)$-truncated. We begin by defining a notion of a \emph{$d$-operad} (\defref{d_operad}) that relates to essentially $d$-operads in the same way that $d$-categories relate to essentially $d$-categories. We then define the \emph{$d$-homotopy operad} functor (\defref{d_homotopy_operad}), again by analogy with (and by means of) the $d$-homotopy category functor. This is achieved by analyzing the behavior of the $d$-homotopy category functor on inner and coCartesian fibrations (\propref{h_d_coCart_edges}). Finally, we bootstrap the results of section 2, to obtain analogues results for (essentially) $d$-operads (\thmref{d-homotopy_operad}) and some corollaries.

This work grew out of a project whose goal is to generalize the classical Eckmann-Hilton argument to the $\infty$-categorical setting. This application, which motivated the general theory we present here, will appear elsewhere.

\paragraph{Conventions.}

We work in the setting of $\infty$-categories (a.k.a. quasi-categories)
and $\infty$-operads, relying heavily on the results of \cite{HTT}
and \cite{HA}. Since we have numerous references to these two foundational
works, references to \cite{HTT} are abbreviated
as T.? and those to \cite{HA} as A.?. As a rule, we follow the notation
of \cite{HTT} and \cite{HA} whenever possible. However, we supplement this notation and deviate from it in several cases in which we believe this enhances readability:
\begin{enumerate}

\item We abuse notation by identifying an ordinary category $\mathcal{C}$
with its nerve $N\left(\mathcal{C}\right)$. 

\item We abbreviate the data of an $\infty$-operad $p\colon \mathcal{O}^{\otimes}\to\finpt$
by $\mathcal{O}$ and reserve the notation $\mathcal{O}^{\otimes}$
for the $\infty$-category that is the source of $p$. Similarly,
given two $\infty$-operads $\mathcal{O}$ and $\mathcal{U}$, we 
write $f\colon \mathcal{O}\to\mathcal{U}$ for a map of $\infty$-operads
from $\mathcal{O}$ to $\mathcal{U}$. The underlying $\infty$-category
of $\mathcal{O}$, which in \cite{HA} is denoted by $\mathcal{O}_{\left\langle 1\right\rangle }^{\otimes}$,
is here denoted by $\underline{\mathcal{O}}$.

\item Given two $\infty$-operads $\mathcal{O}$ and $\mathcal{U}$, we
denote by $\alg_{\mathcal{O}}\left(\mathcal{U}\right)$ the $\infty$-operad
$\alg_{\mathcal{O}}\left(\mathcal{U}\right)^{\otimes}\to\finpt$ from
Example A.3.2.4.4. This is the internal mapping object induced
from the closed symmetric monoidal structure on $\op$ (see A.2.2.5.13).
The underlying $\infty$-category $\underline{\alg}_{\mathcal{O}}\left(\mathcal{U}\right)$
is the usual $\infty$-category of $\mathcal{O}$-algebras in $\mathcal{U}$
(which in \cite{HA} is denoted by $\alg_{\mathcal{O}}\left(\mathcal{U}\right)$).
Moreover, the maximal Kan sub-complex $\underline{\alg}_{\mathcal{O}}\left(\mathcal{U}\right)^{\simeq}$
is the space of morphisms $\map_{\op}\left(\mathcal{O},\mathcal{U}\right)$
from $\mathcal{O}$ to $\mathcal{U}$ as objects of the $\infty$-category
$\op$.

\end{enumerate}

\section{$d$-Categories}

Recall the following definition from classical homotopy theory.
\begin{defn}
\label{def:Truncated_Space}For $d\ge0$, a space $X\in\mathcal{S}$
is called $d$-\emph{truncated} if $\pi_{i}\left(X,x\right)=0$ for all $i>d$
and all $x\in X$. 
In addition, a space is called $\left(-2\right)$-\emph{truncated} if and only if it is contractible and it is called $\left(-1\right)$-\emph{truncated} if and only if it is either contractible or empty. 
We denote by $\mathcal{S}_{\le d}$ the full subcategory of $\mathcal{S}$ spanned by the $d$-truncated spaces. 
The inclusion $\mathcal{S}_{\le d}\into\mathcal{S}$ admits a left adjoint and we call the unit of the adjunction the $d$-\emph{truncation map}.
\end{defn}

This leads to the following definition in $\infty$-category theory.
\begin{defn}
\label{def:Ess_d_Category}Let $d\ge-1$ be an integer. An \emph{essentially
$d$-category} is an $\infty$-category $\mathcal{C}$ such that for
all $X,Y\in\mathcal{C}$, the mapping space $\map_{\mathcal{C}}\left(X,Y\right)$
is $\left(d-1\right)$-truncated. We denote by $\catd$ the full subcategory
of $\cat$ spanned by essentially $d$ -categories. 
\end{defn}

\begin{ex}\label{ex:1-category}
An $\infty$-category $\mathcal{C}$ is an essentially $1$-category
if and only if it lies in the essential image of the nerve functor
$N\colon \mathbf{Cat}\to\cat$ and it is an essentially $0$-category if
and only if it is equivalent to the nerve of a poset.
\end{ex}

One might hope that for an $\infty$-category $\mathcal{C}$, the condition of being an essentially $d$-category would coincide with the condition of begin a $(d-1)$-truncated object of the presentable $\infty$-category $\cat$ in the sense of T.5.5.6.1. This turns out to be \emph{false}. The later condition is equivalent to both spaces  $\map(\Delta^0,\mathcal{C})$ and $\map(\Delta^1,\mathcal{C})$ being $(d-1)$-truncated, while the former to the $(d-1)$-truncatedness of the projection map 
\[
	\map(\Delta^1,\mathcal{C}) \to 
	\map(\Delta^{\{0\}},\mathcal{C}) \times \map(\Delta^{\{1\}},\mathcal{C}).
\]
It can be deduced that a $(d-1)$-truncated object of $\cat$ is an essentially $d$-category and that an essentially $d$-category is a $d$-truncated object of $\cat$. To see that both converses are false, consider on the one hand a $d$-truncated space as an $\infty$-groupoid, and on the other, an $\infty$-category with two objects and a $d$-truncated space of maps from the first to the second (and no other non-trivial maps).

In T.2.3.4, Lurie develops the theory of $d$-categories, which are a strict model for essentially $d$-categories. We begin by recalling some basic definitions and properties. First, we introduce the following definition/notation (which is a variation on notation T.2.3.4.11).

\begin{notation} 
\begin{lemenum} 
\item Let $A\ss B$ and $D$ be simplicial sets. We define $B\rtimes_{A}D$
by the following pushout diagram 
\[
\xymatrix{A\times D\ar[d]\ar[r] & B\times D\ar[d]\\
A\ar[r] & B\rtimes_{A}D
.}
\]
\item Let $A\ss B$ and $X$ be simplicial sets. Given two maps $f,g\colon B\to X$
such that $f|_{A}=g|_{A}$ we obtain a map $f\cup g\colon B\rtimes_{A}\partial\Delta^{1}\to X$.
A homotopy relative to $A$ (or ``rel. $A$'' for short) is an extension
of $f\cup g$ to $B\rtimes_{A}\Delta^{1}$.
\item Given inclusions of simplicial sets $A\ss B\ss C$ and a simplicial
set $X$, let $\left[B,C;X\right]$ be the set of maps $B\to X$ for
which there exists an extension to $C$. We denote by $\left[A,B,C;X\right]$
the set obtained from $\left[B,C;X\right]$ by identifying maps that
are homotopic rel. $A$.
\end{lemenum}
\end{notation}

\begin{rem}
Let $\mathcal{C}$ be an $\infty$-category, let $A\ss B$ be an inclusion
of simplicial sets, and consider $f,g\colon B\to\mathcal{C}$ such that $f|_{A}=g|_{A}$.
By the discussion at the beginning of T.2.3.4, a homotopy from $f$
to $g$ rel. $A$ is the same as an equivalence from $f$ to $g$
as objects of the $\infty$-category $\mathcal{D}$ that is given as a pullback
\[
\xymatrix{\mathcal{D}\ar[d]\ar[r] & \mathcal{C}^{B}\ar[d]\\
\Delta^{0}\ar[r] & \mathcal{C}^{A}
.}
\]

Therefore, the existence of a homotopy rel. $A$ is an \emph{equivalence
relation}. We note that the above diagram is also a \emph{homotopy}
pullback in the Joyal model structure as the right vertical map is
a categorical fibration and all objects are fibrant.
\end{rem}

\begin{defn}[T.2.3.4.1] \label{def:d_cat}
Let $\mathcal{C}$ be a simplicial set and let $d\ge -1$ be an integer. We will say that $\mathcal{C}$ is a \emph{$d$-category} if it is an $\infty$-category and the following additional conditions are satisfied:
\begin{enumerate}
	\item[(1)] Given a pair of maps $f,f'\colon \Delta^d \to \mathcal{C}$, if $f$ and $f'$ are homotopic relative to $\partial \Delta^d$, then $f=f'$.
	
	\item[(2)] Given $m>d$ and a pair of maps $f,f'\colon \Delta^m \to \mathcal{C}$, if $f\mid \partial \Delta^m = f'\mid \partial \Delta^m$, then $f=f'$.
\end{enumerate}
\end{defn}

\begin{ex}
By T.2.3.4.5, an $\infty$-category $\mathcal{C}$ is a $1$-category if and only if it is \emph{isomorphic} to the nerve of an ordinary category. By T.2.3.4.3, it is a $0$-category if and only if it is \emph{isomorphic} to the nerve of a poset (compare Example \exref{1-category})
\end{ex}

Next, we shall recall the definition of the $d$-homotopy category $h_{d}\mathcal{C}$ of an $\infty$-category $\mathcal{C}$.
Using the notation $K^{d}=\mbox{sk}^{d}K$ for the $d$-th skeleton
of a simplicial set $K$, we recall the following construction.

\begin{lem}[T.2.3.4.12] \label{lem:d_homotopy_category}
For $d\ge1$, given an $\infty$-category $\mathcal{C}$,
there exists an essentially unique simplicial set $h_{d}\mathcal{C}$,
such that for every simplicial set $K$, we have a bijection 
\[
\hom\left(K,h_{d}\mathcal{C}\right)\simeq\left[K^{d-1},K^{d},K^{d+1};\mathcal{C}\right]
\]
that is natural in $K$. We denote the canonical map by $\theta_{d}\colon \mathcal{C}\to h_{d}\mathcal{C}$.
\end{lem}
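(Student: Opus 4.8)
The plan is to read the asserted universal property as a representability statement and to produce $h_{d}\mathcal{C}$ as the representing object. Consider the functor $F\colon\mathbf{sSet}^{\mathrm{op}}\to\mathbf{Set}$ given by $F(K)=[K^{d-1},K^{d},K^{d+1};\mathcal{C}]$, with functoriality by precomposition: a map $K'\to K$ carries skeleta to skeleta, and since extendability to $K^{d+1}$ and homotopy rel $K^{d-1}$ are both visibly preserved under pullback, $F$ is well defined on morphisms. Because $\mathbf{sSet}$ is a presheaf category, such a functor is representable as soon as it sends colimits of simplicial sets to limits of sets; and if it is, the representing object $Y$ is forced to have $Y_{n}=\hom(\Delta^{n},Y)=F(\Delta^{n})$, with simplicial structure obtained by applying $F$ to the coface and codegeneracy maps of $\Delta^{\bullet}$. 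Concretely, writing any $K$ as the canonical colimit of its simplices $\colim_{\Delta^{n}\to K}\Delta^{n}$, continuity of $F$ yields $\hom(K,Y)=\lim_{\Delta^{n}\to K}F(\Delta^{n})=F(K)$. Uniqueness up to unique isomorphism is then automatic by Yoneda, which accounts for the phrase ``essentially unique'', and $\theta_{d}\colon\mathcal{C}\to h_{d}\mathcal{C}$ is the morphism corresponding under $\hom(\mathcal{C},h_{d}\mathcal{C})\cong F(\mathcal{C})$ to the class of the restriction to $\mathcal{C}^{d}$ of $\mathrm{id}_{\mathcal{C}}$ (which extends to $\mathcal{C}^{d+1}$ tautologically).

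Everything thus reduces to checking that $F$ carries colimits to limits. First I would observe that $F(K)$ depends only on the $(d+1)$-skeleton, as $K^{j}=(K^{d+1})^{j}$ for $j\le d+1$; so it suffices to treat simplicial sets of dimension $\le d+1$, which are generated under colimits from $\varnothing$ by coproducts and by the cell-attachment pushouts along $\partial\Delta^{k}\to\Delta^{k}$ with $k\le d+1$. It then suffices to verify that $F$ sends coproducts to products (immediate, since all three constituents are computed disjointly on the pieces) and that $F$ sends a cell-attachment pushout $K=A\cup_{C}B$ to the pullback $F(A)\times_{F(C)}F(B)$.

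For the pushout I would analyze the three constituents of $F$ in turn. \emph{Maps:} since the skeleton functor is a left adjoint ($\operatorname{sk}^{d}\dashv\operatorname{cosk}^{d}$) it preserves the pushout, giving $K^{d}=A^{d}\cup_{C^{d}}B^{d}$, whence $\hom(K^{d},\mathcal{C})=\hom(A^{d},\mathcal{C})\times_{\hom(C^{d},\mathcal{C})}\hom(B^{d},\mathcal{C})$ and maps out of $K^{d}$ glue uniquely. \emph{Extendability:} this is a \emph{simplex-local} condition---a map $f\colon K^{d}\to\mathcal{C}$ extends to $K^{d+1}$ if and only if for every nondegenerate $(d+1)$-simplex $\tau$ of $K$ the composite $\partial\Delta^{d+1}\to K^{d}\xrightarrow{f}\mathcal{C}$ admits a filler, because $K^{d+1}$ is built from $K^{d}$ by attaching $(d+1)$-cells along their boundaries and no compatibility among the fillers is required. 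As each $(d+1)$-simplex of $K$ comes from $A$ or from $B$, extendability over $K$ is equivalent to extendability over $A$ and over $B$. This leaves the homotopy relation, which is the decisive point.

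The hard part will be showing that homotopy rel the $(d-1)$-skeleton is compatible with the gluing, i.e.\ that $F$ still sends the pushout to a pullback \emph{after} passing to homotopy classes. This amounts to two claims: (injectivity) if $f,f'\colon K^{d}\to\mathcal{C}$ become homotopic rel $A^{d-1}$ over $A$ and rel $B^{d-1}$ over $B$, then they are homotopic rel $K^{d-1}$ over $K$; and (surjectivity) a compatible pair of classes over $A$ and $B$ can be represented by maps agreeing strictly on $C^{d}$, which then amalgamate. Both reduce to a single gluing statement: given homotopies $H_{A}$ and $H_{B}$ whose restrictions to $C$ represent the same class rel $C^{d-1}$, one must replace $H_{B}$ within its class by one agreeing strictly with $H_{A}$ on $C^{d}\rtimes_{C^{d-1}}\Delta^{1}$, so that $H_{A}$ and $H_{B}$ amalgamate to a homotopy over $K$. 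This is exactly where the $\infty$-categorical structure of $\mathcal{C}$ is used: via the homotopy-pullback description of the preceding Remark, a homotopy rel $A^{d-1}$ is an equivalence in the fibre $\infty$-category cut out by restriction along $C^{d-1}\ss B^{d-1}$, and since that restriction map is a categorical fibration one can lift the given equivalence, realizing the required strict agreement on the overlap. Extendability is preserved throughout because, as already noted, it is invariant under homotopy rel $K^{d-1}$. Granting this homotopy-gluing lemma, $F$ is continuous, hence representable, and the representing object is the desired $h_{d}\mathcal{C}$.
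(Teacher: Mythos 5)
A point of order first: the paper contains no proof of this statement---it is quoted verbatim from Lurie as T.2.3.4.12---so there is no internal argument to compare against, and what you have written is a reconstruction of Lurie's proof. Your architecture is essentially the right one (and close to the one in HTT): force $(h_{d}\mathcal{C})_{m}=F(\Delta^{m})$ and verify that $F(K)=[K^{d-1},K^{d},K^{d+1};\mathcal{C}]$ is continuous, after reducing to the $(d+1)$-skeleton, coproducts, and cell attachments along $\partial\Delta^{k}\to\Delta^{k}$ with $k\le d+1$. Your observations that $F$ depends only on $K^{d+1}$ and that extendability is simplex-local are correct, and the genuinely $\infty$-categorical mechanism you identify---lifting equivalences along the categorical fibrations $\mathcal{C}^{B}\to\mathcal{C}^{A}$ induced by cofibrations---is the right one. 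Note, though, \emph{why} it applies: for $d\ge1$ the subcomplex $K^{d-1}$ contains all vertices, so a homotopy rel $K^{d-1}$ is a natural transformation that is pointwise an identity, hence an equivalence in the fibre $\infty$-category (this is exactly where the hypothesis $d\ge1$ enters; the paper's Remark is stated in this spirit). You should say this explicitly, since for general $A\ss B$ a rel-$A$ homotopy need not be an equivalence.

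The genuine soft spot is your ``single gluing statement.'' For injectivity, what you actually know is that $H_{A}|_{C}$ and $H_{B}|_{C}$ are two homotopies rel $C^{d-1}$ between the \emph{same} pair of restricted maps; this does not make them ``represent the same class'' in any sense that fibration-lifting can exploit---two equivalences between the same objects of the fibre $\infty$-category need not be equivalent as morphisms---so the proposed replacement of $H_{B}$ by one agreeing strictly with $H_{A}$ on the overlap is unjustified, and as a general statement about $A\cup_{C}B$ it is false. Fortunately it is also unnecessary in the cases your reduction leaves. For an attachment with $k\le d$, the sphere $C=\partial\Delta^{k}$ has dimension $\le d-1$, so $C^{d-1}=C^{d}=C^{d+1}=C$: then $F(C)=\hom(C,\mathcal{C})$ with no identifications (compatibility in the fibre product is \emph{strict} equality on $C$), the relative cylinder $C^{d}\rtimes_{C^{d-1}}\Delta^{1}$ collapses to $C$, and any two rel-homotopies automatically agree on the overlap, so maps and homotopies amalgamate on the nose. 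For $k=d+1$ one has $B^{d}=C^{d}=\partial\Delta^{d+1}$ and $B^{d-1}=C^{d-1}$, so no gluing of homotopies occurs at all; the only content is that fillability of a map $\partial\Delta^{d+1}\to\mathcal{C}$ is invariant under homotopy rel the $(d-1)$-skeleton, which is precisely your isofibration argument applied to $\mathcal{C}^{\Delta^{d+1}}\to\mathcal{C}^{\partial\Delta^{d+1}}$ (and the same invariance shows that any representatives you modify remain extendable). With your gluing claim replaced by this case analysis---plus the routine remark that continuity need only be checked on the canonical cell-decomposition colimits, the skeletal filtration being finite after the $(d+1)$-skeleton reduction---your proof closes.
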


Using the above construction, we have the following definition:

\begin{defn}\label{def:d_homotopy_operad}
Given an $\infty$-category $\mathcal{C}$ and an integer $d\ge -2$, we define the $d$-homotopy category of $\mathcal{C}$ to be $h_d\mathcal{C}$ of \lemref{d_homotopy_category} when $d \ge 1$ and 
\begin{paraenum}
\item For $d=-2$ we set $h_{-2}\mathcal{C}=\Delta^{0}$.
\item For $d=-1$ we set $h_{-1}\mathcal{C}=\begin{cases}
\es & \mathcal{C}=\es\\
\Delta^{0} & \mathcal{C}\neq\es
\end{cases}$ with the unique map $\theta_{-1}\colon \mathcal{C}\to h_{-1}\mathcal{C}$.
\item For $d=0$, we first define a pre-ordered set $\tilde{h}_{0}\mathcal{C}$
with the same objects as $\mathcal{C}$ and the relation $x\le y$
if and only if $\map_{\mathcal{C}}\left(X,Y\right)\neq\es$. Then
we define $h_{0}\mathcal{C}$ to be the nerve of the poset obtained
from $\tilde{h}_{0}\mathcal{C}$ by identifying isomorphic objects.
There is a canonical map $\theta_{0}\colon \mathcal{C}\to h_{0}\mathcal{C}$
defined as the composition of $\theta_{1}\colon \mathcal{C}\to h_{1}\mathcal{C}$
with the nerve of the functor that takes each object in the homotopy
category $h_{1}\mathcal{C}$ to its class in $h_{0}\mathcal{C}$ (with the unique definition on morphisms).
\end{paraenum}
\end{defn}

\begin{war}
Note that an $\infty$-category \emph{$\mathcal{C}$} is an essentially
$d$-category if and only if all objects of $\mathcal{C}$ are $\left(d-1\right)$-truncated
in the sense of T.5.5.6.1. Hence, another way to associate an essentially
$d$-category with an $\infty$-category $\mathcal{C}$ is to consider
the full subcategory spanned by the $\left(d-1\right)$-truncated
objects. For a presentable $\infty$-category, this is denoted by $\tau_{\le d-1}\mathcal{C}$
in T.5.5.6.1 and called the \emph{$\left(d-1\right)$-truncation of}
$\mathcal{C}$. We warn the reader that the two essentially $d$-categories
$h_{d}\mathcal{C}$ and $\tau_{\le d-1}\mathcal{C}$ are usually very
different. For example, when $\mathcal{C}=\mathcal{S}$ is the $\infty$-category
of spaces, $h_{1}\mathcal{S}$ is the ordinary homotopy category of
spaces, while $\tau_{\le0}\mathcal{S}$ is equivalent to the ordinary
category of sets. 
\end{war}

The map $\theta_{d}$ has the following universal property.
\begin{lem}
\label{lem:h_d(c)_universal_property} Let $d\ge-1$ and let $\mathcal{C}$ be an $\infty$-category.
\begin{lemenum}
\item The simplicial set $h_{d}\mathcal{C}$ is a $d$-category.
\item The canonical map $\mathcal{C}\to h_{d}\mathcal{C}$ is an isomorphism
if and only if $\mathcal{C}$ is a $d$-category.
\item For every $d$-category $\mathcal{D}$, composition with the canonical
map $\mathcal{C}\to h_{d}\mathcal{C}$ induces an isomorphism of simplicial
sets
\[
\fun\left(h_{d}\mathcal{C},\mathcal{D}\right)\iso\fun\left(\mathcal{C},\mathcal{D}\right).
\]
\end{lemenum}
\end{lem}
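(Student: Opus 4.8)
The plan is to extract parts \emph{(1)}--\emph{(3)} in the principal range $d\ge 1$ from the corepresentability isomorphism of \lemref{d_homotopy_category}, and to dispose of the boundary cases $d=-1,0$ by hand from the explicit descriptions in \defref{d_homotopy_operad}. Two formal consequences of \lemref{d_homotopy_category} will be used repeatedly: by the Yoneda lemma a map $g\colon\mathcal{C}\to\mathcal{C}'$ of $\infty$-categories induces a map $h_{d}(g)\colon h_{d}\mathcal{C}\to h_{d}\mathcal{C}'$ (postcomposition descends to homotopy classes), and the maps $\theta_{d}$ assemble into a natural transformation $\Id\Rightarrow h_{d}$. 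The computational engine is that, taking $K=\Delta^{n}$ in \lemref{d_homotopy_category}, an $n$-simplex of $h_{d}\mathcal{C}$ is the class, rel the $(d-1)$-skeleton, of a map $(\Delta^{n})^{d}\to\mathcal{C}$ admitting an extension over $(\Delta^{n})^{d+1}$, and that $\theta_{d}$ carries an $n$-simplex $c$ of $\mathcal{C}$ to the class of $c|_{(\Delta^{n})^{d}}$.

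For \emph{(1)} I first check that $h_{d}\mathcal{C}$ is an $\infty$-category. Given an inner horn $\Lambda^{n}_{i}\to h_{d}\mathcal{C}$ I choose a representative: when $n\le d+2$ the set $(\Lambda^{n}_{i})^{d+1}$ is all of $\Lambda^{n}_{i}$, so the representative is an honest inner horn $\Lambda^{n}_{i}\to\mathcal{C}$, which $\mathcal{C}$ fills, and applying $\theta_{d}$ to the filler produces the required extension; for $n\ge d+3$ the skeleta $(\Lambda^{n}_{i})^{j}$ and $(\Delta^{n})^{j}$ agree for all $j\le d+1$, so the representative already carries the data of an extension. Condition \emph{(2)} of \defref{d_cat} reduces to injectivity of the restriction $\hom(\Delta^{m},h_{d}\mathcal{C})\to\hom(\partial\Delta^{m},h_{d}\mathcal{C})$ for $m>d$: for $m\ge d+2$ both sides are computed from the same skeleta, while for $m=d+1$ passing to the boundary $(\Delta^{d+1})^{d}=\partial\Delta^{d+1}$ only drops the extendability clause and leaves the relation ``homotopic rel the $(d-1)$-skeleton'' unchanged. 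The delicate point is condition \emph{(1)}: a homotopy rel $\partial\Delta^{d}$ between $f=[\tilde f]$ and $f'=[\tilde f']$ is a map out of the relative cylinder $\Delta^{d}\rtimes_{\partial\Delta^{d}}\Delta^{1}$, which has dimension $d+1$, so ``extending over the $(d+1)$-skeleton'' means the homotopy is represented by a genuine map $\Delta^{d}\rtimes_{\partial\Delta^{d}}\Delta^{1}\to\mathcal{C}$; precomposing with the quotient $\Delta^{d}\times\Delta^{1}\to\Delta^{d}\rtimes_{\partial\Delta^{d}}\Delta^{1}$ yields a homotopy rel $\partial\Delta^{d}$ in $\mathcal{C}$ between representatives of $f$ and $f'$, and transitivity of homotopy rel $\partial\Delta^{d}$ then forces $\tilde f\sim\tilde f'$, i.e. $f=f'$.

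For \emph{(2)}, the implication ``$\theta_{d}$ an isomorphism $\Rightarrow\mathcal{C}$ a $d$-category'' is immediate from \emph{(1)}. For the converse I assume $\mathcal{C}$ is a $d$-category and show $\theta_{d}$ is bijective on $n$-simplices. Injectivity: if $c|_{(\Delta^{n})^{d}}$ and $c'|_{(\Delta^{n})^{d}}$ are homotopic rel the $(d-1)$-skeleton, then on each $d$-face this restricts to a homotopy rel $\partial\Delta^{d}$, so condition \emph{(1)} forces literal equality face by face, whence $c$ and $c'$ agree on $(\Delta^{n})^{d}$, and condition \emph{(2)} applied up the skeleta gives $c=c'$. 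Surjectivity amounts, for $n\ge d+2$, to extending a map given on $(\Delta^{n})^{d+1}$ over the higher skeleta; the key lemma is that in a $d$-category every map $\partial\Delta^{k}\to\mathcal{C}$ with $k\ge d+2$ extends over $\Delta^{k}$, proved by filling an inner horn $\Lambda^{k}_{i}$ in $\mathcal{C}$ and then using condition \emph{(2)} on the remaining $(k-1)$-face (of dimension $\ge d+1$) to see the filler agrees with the given map on all of $\partial\Delta^{k}$.

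For \emph{(3)} I reduce the isomorphism of simplicial sets to its effect on $0$-simplices. Precomposition with $\theta_{d}$ on the set of maps into a $d$-category $\mathcal{D}$ is surjective because $\bar g:=(\theta_{d}^{\mathcal{D}})^{-1}\circ h_{d}(g)$ splits it, using naturality of $\theta_{d}$ and the invertibility of $\theta_{d}^{\mathcal{D}}$ from \emph{(2)}; it is injective because, by \emph{(2)} applied to $\mathcal{D}$, any map $\Delta^{n}\to\mathcal{D}$ is determined by its restriction to $(\Delta^{n})^{d}$, so two maps out of $h_{d}\mathcal{C}$ agreeing after $\theta_{d}$ agree on all $d$-skeleta and hence coincide. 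To upgrade this to an isomorphism of simplicial sets I pass to $n$-simplices through the exponential law $\hom(\Delta^{n}\times X,\mathcal{D})\cong\hom(X,\fun(\Delta^{n},\mathcal{D}))$ and invoke the standard fact that $\fun(\Delta^{n},\mathcal{D})$ is again a $d$-category. Finally $d=-1$ and $d=0$ are checked directly from \defref{d_homotopy_operad}: for $d=-1$ the only $(-1)$-categories are $\es$ and $\Delta^{0}$ and all three assertions are immediate, while for $d=0$ one uses that $0$-categories are precisely nerves of posets together with the poset-reflection universal property defining $h_{0}\mathcal{C}$. I expect condition \emph{(1)} in part \emph{(1)} and the verification of the exponential-ideal fact used in \emph{(3)} to be the main obstacles; everything else is skeletal bookkeeping driven by \lemref{d_homotopy_category}.
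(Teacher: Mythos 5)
Your proof is correct, but it takes a genuinely different route from the paper's: for $d\ge1$ the paper disposes of all three assertions with a single citation of T.2.3.4.12 (Lurie's proposition contains not only the corepresentability recorded in \lemref{d_homotopy_category} but also statements \emph{(1)}--\emph{(3)}), and then handles $d=-1,0$ by hand. What you have done is reconstruct the proof of the cited result from the universal mapping property alone, and the reconstruction is sound: the case split at $m\le d+2$ versus $m\ge d+3$ for horn filling (using that $\Lambda_i^m$ and $\Delta^m$ share skeleta up to dimension $m-2$), the observation that passing from $\Delta^{d+1}$ to $\partial\Delta^{d+1}=(\Delta^{d+1})^{d}$ merely drops the extendability clause while keeping the relation ``homotopic rel the $(d-1)$-skeleton'', the use of the relative cylinder $\Delta^{d}\rtimes_{\partial\Delta^{d}}\Delta^{1}$ (whose dimension $d+1$ makes a homotopy in $h_d\mathcal{C}$ strictly representable in $\mathcal{C}$) together with transitivity of homotopy rel $\partial\Delta^{d}$, and the key extension lemma for $\partial\Delta^{k}\to\mathcal{C}$ with $k\ge d+2$ (fill an inner horn, then force agreement on the omitted face via condition (2) of \defref{d_cat}) are precisely the ingredients of Lurie's argument, each correctly executed. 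Two remarks on the remaining seams. First, the ``exponential-ideal'' fact you flag as an obstacle --- that $\fun\left(\Delta^{n},\mathcal{D}\right)$ is again a $d$-category --- is exactly T.2.3.4.8, which the paper itself invokes elsewhere (in the proof of \propref{Alg_d_Category}), so you may legitimately cite it rather than verify it; with that citation your reduction of \emph{(3)} to the $0$-simplex level is complete. Second, your thinnest point is $d=0$, part \emph{(3)}: the paper factors the map as $\fun\left(h_{0}\mathcal{C},\mathcal{D}\right)\to\fun\left(h_{1}\mathcal{C},\mathcal{D}\right)\iso\fun\left(\mathcal{C},\mathcal{D}\right)$ using the already-established case $d=1$ and then appeals to the elementary fact that a functor from an ordinary category to a poset factors uniquely through the poset reflection; your sketch rests on the same two facts, and your exponential-law device does cover the simplicial upgrade (since $\fun\left(\Delta^{n},N(P)\right)$ is again the nerve of a poset), but writing out the factorization through $h_{1}\mathcal{C}$ as the paper does would make this case airtight. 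In sum: your approach buys self-containedness --- the lemma is actually proved from \lemref{d_homotopy_category} rather than outsourced --- at the cost of length, whereas the paper's citation buys brevity at the cost of hiding exactly the skeletal bookkeeping you carried out.
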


\begin{proof}
For $d\ge1$ this is the content of T.2.3.4.12. For $d=-1$ this is
trivial. For $d=0$, (1) and (2) are obvious from the definition.
For (3) observe that we have a factorization of the map in question:
\[
\fun\left(h_{0}\mathcal{C},\mathcal{D}\right)\to\fun\left(h_{1}\mathcal{C},\mathcal{D}\right)\iso\fun\left(\mathcal{C},\mathcal{D}\right),
\]
where the second map is an isomorphism (from the claim for $d=1$). Therefore, we can assume that $\mathcal{C}$ is an ordinary category and $\mathcal{D}$ is a poset and hence both simplicial sets are discrete. 
The result now follows from the observation that every functor $\mathcal{C}\to\mathcal{D}$ factors uniquely through $h_{0}\mathcal{C}$. 
\end{proof}

Using the above results, we get the following:

\begin{prop}
\label{prop:h_d_left_adjoint} The inclusion $\catd\into\cat$ admits
a left adjoint $h_{d}\colon \cat\to\catd$ with unit map given by $\theta_{d}\colon \mathcal{C}\to h_{d}\mathcal{C}$.
\end{prop}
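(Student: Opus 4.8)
The plan is to realize $h_{d}\mathcal{C}$ as the reflection of $\mathcal{C}$ into the full subcategory $\catd$ and to invoke the standard criterion producing a left adjoint to a fully faithful functor from objectwise reflections (T.5.2.7.8). Since the inclusion $\catd\into\cat$ is fully faithful, it admits a left adjoint with unit $\theta_{d}$ as soon as, for every $\mathcal{C}\in\cat$, the object $h_{d}\mathcal{C}$ lies in $\catd$ and the map $\theta_{d}\colon\mathcal{C}\to h_{d}\mathcal{C}$ exhibits it as a $\catd$-localization; concretely, that for every essentially $d$-category $\mathcal{D}$ precomposition with $\theta_{d}$ induces an equivalence of spaces
\[
\theta_{d}^{*}\colon\map_{\cat}\left(h_{d}\mathcal{C},\mathcal{D}\right)\to\map_{\cat}\left(\mathcal{C},\mathcal{D}\right).
\]
That $h_{d}\mathcal{C}\in\catd$ is immediate from \lemref{h_d(c)_universal_property}\emph{(1)}, a $d$-category being in particular an essentially $d$-category, so the whole proposition reduces to the displayed equivalence.

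To compute these mapping spaces I would use that $\cat$ is presented by the Joyal model structure, so that for $\infty$-categories $\mathcal{A}$ and $\mathcal{B}$ the space $\map_{\cat}(\mathcal{A},\mathcal{B})$ is modeled by the maximal Kan subcomplex $\fun(\mathcal{A},\mathcal{B})^{\simeq}$ of the functor $\infty$-category. When the target $\mathcal{D}$ is a \emph{strict} $d$-category, \lemref{h_d(c)_universal_property}\emph{(3)} already provides an isomorphism of simplicial sets $\fun(h_{d}\mathcal{C},\mathcal{D})\iso\fun(\mathcal{C},\mathcal{D})$ induced by $\theta_{d}$; restricting to maximal Kan subcomplexes yields the desired equivalence (in fact an isomorphism) $\theta_{d}^{*}$ in this case.

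The remaining, and I expect main, difficulty is to upgrade this from strict $d$-categories to \emph{arbitrary} essentially $d$-categories $\mathcal{D}$, since \lemref{h_d(c)_universal_property}\emph{(3)} speaks only of strict targets. Here I would appeal to the fact that $d$-categories are a strict model for essentially $d$-categories: every essentially $d$-category is categorically equivalent to a strict $d$-category $\mathcal{D}'$ (by T.2.3.4.18 for $d\ge1$, and directly for $d\in\{-1,0\}$, e.g.\ via \exref{1-category} for $d=0$). Since $\map_{\cat}(\mathcal{A},-)$ is a functor valued in spaces, it carries the equivalence $\mathcal{D}\simeq\mathcal{D}'$ to an equivalence naturally in $\mathcal{A}$; comparing $\theta_{d}^{*}$ for the targets $\mathcal{D}$ and $\mathcal{D}'$ then gives a commuting square whose two vertical maps and bottom horizontal map are equivalences, forcing the top map to be one as well. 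This verifies the reflection property for all essentially $d$-categories and hence, via the localization criterion, produces the left adjoint $h_{d}$ with unit $\theta_{d}$, uniformly for all $d\ge-1$.
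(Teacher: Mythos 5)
Your proposal is correct and follows essentially the same route as the paper: strictify the target via T.2.3.4.18, apply \lemref{h_d(c)_universal_property}(3) to get an isomorphism of functor categories, pass to maximal Kan subcomplexes, and conclude via the localization criterion T.5.2.7.6/T.5.2.7.8. The only difference is that you spell out the reduction from arbitrary essentially $d$-categories to strict $d$-categories (the commuting square of equivalences), a step the paper leaves implicit.
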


\begin{proof}
By T.2.3.4.18, every essentially $d$-category is equivalent to a
$d$-category and for every $d$-category $\mathcal{D}$, the map 
\[
\fun\left(h_{d}\mathcal{C},\mathcal{D}\right)\to\fun\left(\mathcal{C},\mathcal{D}\right)
\]
is an isomorphism by \lemref{h_d(c)_universal_property}. Restricting to the maximal Kan sub-complexes, the map of simplicial sets 

\[
\theta_{d}^{*}\colon \map_{\catd}\left(h_{d}\mathcal{C},\mathcal{D}\right)\to\map_{\cat}\left(\mathcal{C},\mathcal{D}\right)
\]
is a homotopy equivalence. It now follows that $\theta_{d}$ exhibits $h_{d}\mathcal{C}$ as the
$\catd$-localization of $\mathcal{C}$ in the sense of T.5.2.7.6. Thus, the claim about the existence of a left adjoint follows
from T.5.2.7.8 and the claim about the unit follows from the proof of T.5.2.7.8.
\end{proof}
The main goal of this section is to show that for every $\infty$-category $\mathcal{C}$, the $d$-category $h_{d}\mathcal{C}$ is obtained (as one would expect) by $\left(d-1\right)$-truncation of the mapping spaces. 
The main ingredient is the following explicit description of the right mapping space in the $d$-homotopy category. 
\begin{prop}
\label{prop:h_d_mapping_spaces_cat} Let $d\ge-1$ and let $\mathcal{C}$
be an $\infty$-category. For every $X,Y\in\mathcal{C}$, there is
a canonical isomorphism $\alpha$ of simplicial sets rendering the
following diagram commutative:
\[
\xymatrix{ & \hom_{\mathcal{C}}^{R}\left(X,Y\right)\ar[ld]_{\beta}\ar[rd]^{\gamma}\\
\hom_{h_{d}\mathcal{C}}^{R}\left(\theta_{d}\left(X\right),\theta_{d}\left(Y\right)\right)\ar[rr]_{\alpha}^{\sim} &  & h_{d-1}\hom_{\mathcal{C}}^{R}\left(X,Y\right),
}
\]
where $\beta$ and $\gamma$ are the obvious maps.
\end{prop}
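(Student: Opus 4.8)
The plan is to obtain $\alpha$ from the universal characterization of $h_{d-1}$. Concretely, I will identify, naturally in a test simplicial set $K$, the set $\hom\bigl(K,\hom^{R}_{h_{d}\mathcal{C}}(\theta_{d}X,\theta_{d}Y)\bigr)$ with the set $[K^{d-2},K^{d-1},K^{d};\hom^{R}_{\mathcal{C}}(X,Y)]$, which by \lemref{d_homotopy_category} corepresents $h_{d-1}\hom^{R}_{\mathcal{C}}(X,Y)$. The essential uniqueness asserted in \lemref{d_homotopy_category} then yields the isomorphism $\alpha$, and commutativity of the triangle follows by tracing the identification on simplices, noting that on one side it is induced by $\theta_{d}$ (this is $\beta$) while on the other it realizes the canonical map $\theta_{d-1}$ for the space $\hom^{R}_{\mathcal{C}}(X,Y)$ (this is $\gamma$). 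Since \lemref{d_homotopy_category} presupposes $d-1\ge 1$, the values $d\in\{-1,0,1\}$ are treated separately and directly from the explicit descriptions in \defref{d_homotopy_operad}, where both vertices collapse to a point, to an empty-or-point, and to the set of components, respectively. So assume $d\ge 2$.

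The identification rests on two inputs. First, $\hom^{R}_{\mathcal{C}}(X,Y)=\{X\}\times_{\mathcal{C}}\mathcal{C}_{/Y}$, so by the defining property of the slice, for \emph{every} $K$ the set $\hom\bigl(K,\hom^{R}_{\mathcal{C}}(X,Y)\bigr)$ consists of the maps $K\star\Delta^{0}\to\mathcal{C}$ that are constant at $X$ along $K$ and send the cone point to $Y$. Second, \lemref{d_homotopy_category} applied to $K\star\Delta^{0}$ identifies $\hom(K\star\Delta^{0},h_{d}\mathcal{C})$ with $[(K\star\Delta^{0})^{d-1},(K\star\Delta^{0})^{d},(K\star\Delta^{0})^{d+1};\mathcal{C}]$. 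The bridge between the two is the skeletal identity
\[
(K\star\Delta^{0})^{j}=K^{j}\cup_{K^{j-1}}\bigl(K^{j-1}\star\Delta^{0}\bigr),
\]
which holds because a nondegenerate $j$-simplex of $K\star\Delta^{0}$ either lies in $K$ or is the cone on a $(j-1)$-simplex of $K$. Imposing the constraints — constant at $X$ along $K$, value $Y$ at the cone point — rigidifies the $K^{j}$-summand and leaves on the cone summand exactly a map $K^{j-1}\to\hom^{R}_{\mathcal{C}}(X,Y)$. Thus the three layers $j=d-1,d,d+1$ governing $h_{d}\mathcal{C}$ correspond, after a shift of degree by one, to the layers $d-2,d-1,d$ governing $h_{d-1}\hom^{R}_{\mathcal{C}}(X,Y)$; this is the mechanism responsible for the drop $d\mapsto d-1$.

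The hard part is not the matching of the underlying maps but of the two equivalence relations, together with the fact that the constancy constraint can only be read off up to homotopy. In translating a class in $[(K\star\Delta^{0})^{d-1},\dots;\mathcal{C}]$ into the condition ``constant at $\theta_{d}X$ along $K$'', a representative is only guaranteed to be constant up to homotopy rel $K^{d-1}$, so one must rectify representatives within their class; and one must show that homotopy rel $(K\star\Delta^{0})^{d-1}$ in $\mathcal{C}$, subject to the constraints, corresponds precisely to homotopy rel $K^{d-2}$ in $\hom^{R}_{\mathcal{C}}(X,Y)$. The obstruction to doing this naively is that $-\star\Delta^{0}$ fails to commute with the pushouts (the $\rtimes$-construction) and the skeletal filtrations used to encode homotopies, so the skeletal identity above does not propagate to the $\Delta^{1}$-families that witness the relations. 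This is exactly the point at which I would compare with the middle mapping space $M_{\mathcal{C}}(X,Y)=\{X\}\times_{\mathcal{C}}\mathcal{C}^{\Delta^{1}}\times_{\mathcal{C}}\{Y\}$, whose functor of points is computed with the product $K\times\Delta^{1}$; since $-\times\Delta^{1}$ preserves all colimits, the skeletal and homotopy bookkeeping becomes clean and the two relations match on the nose. Concretely, I would transport questions about homotopies of constrained simplices to the middle model, where — by the pullback description of homotopies rel $A$ recalled before \defref{d_cat} — they are governed by equivalences in an explicit $\infty$-category, carry out the comparison there, and return to $\hom^{R}$ along the natural comparison equivalences between the right and middle mapping spaces, which respect both the constancy structure and the maps induced by $\theta_{d}$ and $\theta_{d-1}$.

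Once the equivalence relations are matched the natural identification of the first paragraph is complete, $\alpha$ is produced, and as a byproduct $\hom^{R}_{h_{d}\mathcal{C}}(\theta_{d}X,\theta_{d}Y)$ is exhibited as a $(d-1)$-category, consistent with its being a right mapping space of the $d$-category $h_{d}\mathcal{C}$. I expect the homotopy-relation bookkeeping of the previous paragraph — reconciling the join-based description of the right mapping space with the colimit-friendly product model — to absorb essentially all of the technical effort; by contrast the slice description and the skeletal identity are the conceptual core, already making transparent both why the answer is $h_{d-1}$ of the mapping space and why the degree drops by exactly one.
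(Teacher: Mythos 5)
Your proposal follows essentially the same route as the paper's proof: compare functors of points using the corepresentability of the $d$-homotopy category (\lemref{d_homotopy_category}), exploit the skeletal identity for the cone on $K$ to produce the degree drop $d\mapsto d-1$, and resolve the matching of homotopy relations by passing to the middle mapping space, precisely because the product-based model commutes with the relative cylinders $\rtimes$ while the join does not. Your skeletal identity $(K\star\Delta^{0})^{j}=K^{j}\cup_{K^{j-1}}(K^{j-1}\star\Delta^{0})$ is, after collapsing $K$, exactly the paper's isomorphism $J(K)^{d}\cong J(K^{d-1})$ for $J(K)=K\star\Delta^{0}/K$, and your middle-model step is carried out in the paper as \lemref{Homotopy_Cylinder} and \lemref{Homotopy_Rel_A}.

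One organizational difference is worth flagging, since it is the source of the ``rectification'' problem you raise. You apply \lemref{d_homotopy_category} to $K\star\Delta^{0}$ and then impose constancy along $K$ as a condition on classes, which indeed only pins representatives down to maps constant up to homotopy rel $K^{d-1}$. The paper instead applies the corepresentability to the quotient $J(K)$ itself, obtaining $\hom(K,\hom^{R}_{h_{d}\mathcal{C}}(\theta_{d}X,\theta_{d}Y))=[J(K)^{d-1},J(K)^{d},J(K)^{d+1};\mathcal{C}]_{(X,Y)}$, so every representative is \emph{strictly} constant along the collapsed copy of $K$ and no rectification is ever needed; your own first paragraph already contains this fix, since maps $K\star\Delta^{0}\to h_{d}\mathcal{C}$ strictly constant on $K$ are literally maps $J(K)\to h_{d}\mathcal{C}$. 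Note also that the middle mapping space, which you invoke to absorb this step, does not by itself address rectification -- it resolves only the other difficulty (homotopies rel skeleta), and the strictness issue in the middle model is avoided by the same quotient device ($\Sigma(K)=K\diamond\Delta^{0}/K$). Finally, your exclusion of $d=1$ from the general argument is safe but unnecessary: the formula $\hom(K,h_{0}M)=[K^{-1},K^{0},K^{1};M]$ holds for a Kan complex $M$ directly from the definition of $h_{0}$, and the paper runs the uniform argument for all $d\ge1$.
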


We defer the rather technical proof of \propref{h_d_mapping_spaces_cat}
to the end of the section. Assuming \propref{h_d_mapping_spaces_cat},
we get
\begin{cor}
\label{cor:Theta_d_Mapping_Space_Cat}Let $d\ge-1$ and let $\mathcal{C}$
be an $\infty$-category. The canonical map $\theta_{d}\colon \mathcal{C}\to h_{d}\mathcal{C}$
is essentially surjective and for every $X,Y\in\mathcal{C}$, the
induced map
\[
\map_{\mathcal{C}}\left(X,Y\right)\to\map_{h_{d}\mathcal{C}}\left(\theta_{d}\left(X\right),\theta_{d}\left(Y\right)\right)
\]
 is a $\left(d-1\right)$-truncation map. 
\end{cor}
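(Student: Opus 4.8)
The essential surjectivity is immediate from the construction and can be disposed of first: for $d\ge 1$ the simplicial sets $\mathcal{C}$ and $h_{d}\mathcal{C}$ have the same vertices (take $K=\Delta^{0}$ in \lemref{d_homotopy_category}), so $\theta_{d}$ is bijective on objects; for $d=0$ and $d=-1$ surjectivity on objects is built into \defref{d_homotopy_operad}. In every case $\theta_{d}$ is (essentially) surjective, so the remaining content is the statement about mapping spaces.

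The plan is to transport that statement onto the right-hand vertex of the triangle in \propref{h_d_mapping_spaces_cat}. The right mapping space $\hom^{R}_{\mathcal{C}}(X,Y)$ is a functorial model for $\map_{\mathcal{C}}(X,Y)$, so the map in the statement is computed, up to equivalence, by $\beta$. Since $\alpha$ is an isomorphism with $\alpha\circ\beta=\gamma$, the map $\beta$ is a $(d-1)$-truncation map if and only if $\gamma$ is. Writing $W:=\hom^{R}_{\mathcal{C}}(X,Y)$, which is a Kan complex, the whole corollary thus reduces to the following assertion: for every Kan complex $W$ and every $d\ge -1$, the canonical map $\gamma\colon W\to h_{d-1}W$ is a $(d-1)$-truncation map.

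To prove this reduced claim I would show that $h_{d-1}W$ is a $(d-1)$-truncated space and that $\gamma$ has the universal property of the $(d-1)$-truncation. By \lemref{h_d(c)_universal_property} the simplicial set $h_{d-1}W$ is a $(d-1)$-category, hence an essentially $(d-1)$-category with $(d-2)$-truncated mapping spaces in the sense of \defref{Ess_d_Category}. Moreover it is an $\infty$-groupoid: since every edge of the Kan complex $W$ is an equivalence, so is every edge of $h_{d-1}W$ (this can be read off the explicit description of \lemref{d_homotopy_category}, or deduced from the universal property by factoring $\gamma$ through the maximal sub-groupoid $(h_{d-1}W)^{\simeq}$). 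A groupoid whose loop spaces are $(d-2)$-truncated is exactly a $(d-1)$-truncated space, since $\pi_{i}(h_{d-1}W)\cong\pi_{i-1}(\Omega\,h_{d-1}W)=0$ for $i>d-1$. Next, for any $(d-1)$-truncated space $Z$ — which is in particular an essentially $(d-1)$-category — the localization property of $\theta_{d-1}$ established in the proof of \propref{h_d_left_adjoint} yields an equivalence $\map_{\cat}(h_{d-1}W,Z)\to\map_{\cat}(W,Z)$; as all three objects are groupoids, these are ordinary mapping spaces of spaces. Thus $\gamma$ exhibits the $(d-1)$-truncated space $h_{d-1}W$ as the $\mathcal{S}_{\le d-1}$-localization of $W$, i.e.\ as its $(d-1)$-truncation in the sense of \defref{Truncated_Space}. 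This argument applies uniformly for $d\ge 1$, and the boundary values $d\in\{-1,0\}$ are checked by inspection: $h_{-2}W=\Delta^{0}$ is the $(-2)$-truncation of $W$, and $h_{-1}W$ is empty or contractible according to whether $W$ is, which is precisely the $(-1)$-truncation.

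The only delicate point is the verification that $h_{d-1}W$ is a groupoid, on which the identification of its truncation degree rests; everything else is a formal consequence of \propref{h_d_mapping_spaces_cat} together with the universal properties already recorded, the genuinely hard work having been absorbed into \propref{h_d_mapping_spaces_cat} itself.
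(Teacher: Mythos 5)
Your proof is correct, and it agrees with the paper up to and including the key reduction: both arguments use \propref{h_d_mapping_spaces_cat} to replace the map of mapping spaces by $\gamma\colon W\to h_{d-1}W$ for the Kan complex $W=\hom_{\mathcal{C}}^{R}\left(X,Y\right)$, so that everything rests on showing $\gamma$ is a $(d-1)$-truncation map. From there the routes genuinely diverge. The paper proves the reduced claim by induction on $d$: it uses that $\hom_{W}^{R}\left(p,q\right)$ models the path space $P_{p,q}W$, applies \propref{h_d_mapping_spaces_cat} a second time (at level $d-1$, to $W$ itself) to identify $P_{p,q}\left(h_{d-1}W\right)\simeq h_{d-2}\left(P_{p,q}W\right)$, and concludes from the elementary fact that a map of spaces which is surjective on $\pi_{0}$ and a $(d-2)$-truncation on all path spaces is a $(d-1)$-truncation. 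You instead avoid induction entirely and argue by the universal property: $h_{d-1}W$ is a $(d-1)$-truncated space (a groupoid with $(d-2)$-truncated loop spaces), and the localization property from the proof of \propref{h_d_left_adjoint}, combined with the fullness of $\mathcal{S}\subseteq\cat$, identifies $\gamma$ with the unit of the truncation adjunction of \defref{Truncated_Space}. Notably, both arguments depend on the same delicate point — that $h_{d-1}W$ is a Kan complex — which the paper leaves implicit (its induction simply speaks of ``a map of spaces''), whereas you isolate it and offer two valid justifications: every edge of $h_{d-1}W$ is the image of an edge of $W$, hence an equivalence; or the factorization of $\gamma$ through $\left(h_{d-1}W\right)^{\simeq}$, which is again a $(d-1)$-category, so that \lemref{h_d(c)_universal_property} forces the inclusion to be an isomorphism. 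The trade-off: the paper's induction is self-contained, recycling only \propref{h_d_mapping_spaces_cat}; your route additionally needs T.2.3.4.18 for the step that a $(d-1)$-category has $(d-2)$-truncated mapping spaces (a citation you should make explicit), but in exchange it is non-inductive and makes the logical structure — truncatedness of the target plus the universal property — more transparent. Your handling of essential surjectivity and of the boundary cases $d\in\left\{ -1,0\right\} $ matches the paper's.
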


\begin{proof}
It is clear that $\theta_{d}$ is essentially surjective since it
is surjective on objects. Let $X,Y\in\mathcal{C}$ be two objects.
Since the map 
\[
\map_{\mathcal{C}}\left(X,Y\right)\to\map_{h_{d}\mathcal{C}}\left(\theta_{d}\left(X\right),\theta_{d}\left(Y\right)\right)
\]
is represented by the map
\[
\theta\colon \hom_{\mathcal{C}}^{R}\left(X,Y\right)\to h_{d-1}\hom_{\mathcal{C}}^{R}\left(X,Y\right),
\]
it will be enough to show that for every Kan complex $X$, the map
$X\to h_{d-1}X$ is a $\left(d-1\right)$-truncation map. We prove
this by induction. For $d\le0$ it is clear. For $d\ge1$, recall
that $\hom_{X}^{R}\left(p,q\right)$ has the homotopy type of the
path space $P_{p,q}X$ between $p$ and $q$ in $X$ when viewed as a space.
Thus, by induction, $\theta$ is a map of spaces that is surjective
on $\pi_{0}$ and induces the $\left(d-2\right)$-truncation map on path spaces
\[
P_{p,q}X\to P_{p,q}\left(h_{d-1}X\right)\simeq h_{d-2}\left(P_{p,q}X\right).
\]
It follows that $\theta$ is a $\left(d-1\right)$-truncation map.
\end{proof}
\begin{thm}
\label{thm:d-homotopy_category}The inclusion functor $\catd\into\cat$ admits
a left adjoint $h_{d}$ such that for every $\infty$-category $\mathcal{C}$, the value of $h_d$ on $\mathcal{C}$  is the $d$-homotopy category of $\mathcal{C}$, the unit transformation $\theta_{d}\colon \mathcal{C}\to h_{d}\mathcal{C}$
is essentially surjective, and for all $X,Y\in\mathcal{C}$, the map of spaces
\[
\map_{\mathcal{C}}\left(X,Y\right)\to\map_{h_{d}\mathcal{C}}\left(\theta_{d}\left(X\right),\theta_{d}\left(Y\right)\right)
\]
is the $\left(d-1\right)$-truncation map.
\end{thm}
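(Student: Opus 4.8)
The plan is to recognize that this theorem is essentially a repackaging of the two results that immediately precede it, so the work consists entirely in assembling them and checking that the pieces of structure they produce are mutually compatible. First I would invoke \propref{h_d_left_adjoint}, which already furnishes a left adjoint $h_d$ to the inclusion $\catd\into\cat$ and identifies its unit with the canonical map $\theta_d\colon\mathcal{C}\to h_d\mathcal{C}$. Since the simplicial set $h_d\mathcal{C}$ produced there is by construction the $d$-homotopy category of \defref{d_homotopy_operad} (for $d\ge1$ via \lemref{d_homotopy_category}, and by the explicit low-dimensional definitions otherwise), this single step simultaneously settles the existence of the adjoint and the identification of its value on $\mathcal{C}$ with the $d$-homotopy category.

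Next I would read off the two remaining assertions from \corref{Theta_d_Mapping_Space_Cat}. That corollary states precisely that $\theta_d$ is essentially surjective and that, for every pair of objects $X,Y\in\mathcal{C}$, the induced map $\map_{\mathcal{C}}(X,Y)\to\map_{h_d\mathcal{C}}(\theta_d(X),\theta_d(Y))$ is a $(d-1)$-truncation map. Because the unit of the adjunction supplied in the first step is exactly $\theta_d$, there is nothing further to reconcile: the essential surjectivity and the truncation statement transfer verbatim to the unit transformation, and assembling the two inputs yields all clauses of the theorem at once.

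The point I want to stress is that for \emph{this} statement there is no genuine obstacle remaining; the real difficulty has already been discharged upstream. All of the analytic content sits in \propref{h_d_mapping_spaces_cat}, the explicit computation of the right mapping spaces of $h_d\mathcal{C}$ as $h_{d-1}$ applied to the right mapping spaces of $\mathcal{C}$, whose proof is deferred to the end of the section; \corref{Theta_d_Mapping_Space_Cat} then extracts the truncation statement from it by an induction on $d$ using the path-space description of $\hom_{\mathcal{C}}^{R}$. Consequently, the only care I would exercise here is bookkeeping: confirming that the range of $d$ is consistent across \propref{h_d_left_adjoint} and \corref{Theta_d_Mapping_Space_Cat} (both valid for $d\ge-1$), and that ``essentially surjective'' and ``$(d-1)$-truncation map'' are asserted of the same unit map in both inputs. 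Granting that, the theorem follows by direct combination.
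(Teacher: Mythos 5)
Your proposal is correct and matches the paper's own proof, which consists of exactly the same one-line combination of \propref{h_d_left_adjoint} (for the adjoint, its value, and the identification of the unit with $\theta_{d}$) and \corref{Theta_d_Mapping_Space_Cat} (for essential surjectivity and the $\left(d-1\right)$-truncation statement). Your accompanying remarks about the real content living upstream in \propref{h_d_mapping_spaces_cat} are also accurate.
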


\begin{proof}
Combine \propref{h_d_left_adjoint} and \corref{Theta_d_Mapping_Space_Cat}.
\end{proof}
To prove \propref{h_d_mapping_spaces_cat}, we begin by recalling the
definitions of the ``right'' and ``middle'' mapping spaces.
Let $J\colon \sset\to\sset_{\partial\Delta^{1}/}$ be the functor given
by $J\left(K\right)=K\star\Delta^{0}/K$, with the natural map $\partial\Delta^{1}\to J\left(K\right)$
taking $0$ to the image of $K$ and $1$ to the cone point. Recall
that by the definition of the right mapping space (right before T.1.2.2.3),
we have 
\[
\hom(\Delta^{n},\hom_{\mathcal{C}}^{R}\left(X,Y\right))=
\hom_{(X,Y)}(J(\Delta^{n}),\mathcal{C}),
\]
where the subscript $\left(X,Y\right)$ in the right hand side means
we take the subset of maps that restrict to $\left(X,Y\right)$ on
$\partial\Delta^{1}$. Since $J$ preserves colimits, it follows that
for every simplicial set $K$, we have a canonical isomorphism
\[
\hom(K,\hom_{\mathcal{C}}^{R}\left(X,Y\right))=
\hom_{(X,Y)}(J(K),\mathcal{C}).
\]
Similarly, we can construct the ``middle mapping space''. Let $\Sigma\colon \sset\to\sset$
be the functor given by $\Sigma\left(K\right)=K\diamond\Delta^{0}/K$.
This also comes with a canonical map $\partial\Delta^{1}\to\Sigma\left(K\right)$,
and similarly, from the definition of the middle mapping space (right
after remark T.1.2.2.5), we have 
\[
\hom(K,\hom_{\mathcal{C}}^{M}(X,Y))=\hom_{(X,Y)}(\Sigma(K),\mathcal{C}).
\]
There is a canonical categorical equivalence $K\diamond\Delta^{0}\iso K\star\Delta^{0}$
that induces a categorical equivalence $\Sigma K\to J\left(K\right)$
that induces a Kan equivalence
\[
\Phi\colon \hom_{\mathcal{C}}^{R}\left(X,Y\right)\iso\hom_{\mathcal{C}}^{M}\left(X,Y\right)
\]
of Kan complexes. 

For $f\colon K\to\hom_{\mathcal{C}}^{R}\left(X,Y\right)$, we denote by
$\overline{f}\colon J\left(K\right)\to\mathcal{C}$ the corresponding map
in the definition of $\hom_{\mathcal{C}}^{R}\left(X,Y\right)$. We
also denote by $F=\Phi\circ f$ and $\overline{F}\colon \Sigma\left(K\right)\to\mathcal{C}$
the corresponding map in the definition of $\hom_{\mathcal{C}}^{M}\left(X,Y\right)$.
We begin with the following technical lemma.
\begin{lem}
\label{lem:Homotopy_Cylinder} Given simplicial sets $A\ss B$ and
$D$, there is a canonical isomorphism 
\[
\Sigma\left(B\rtimes_{A}D\right)\iso\Sigma B\rtimes_{\Sigma A}D.
\]
\end{lem}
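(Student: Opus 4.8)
The plan is to reduce the statement to a manipulation of pushouts, the only non-formal inputs being an explicit pushout description of $\Sigma$ and a single commutation identity.

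First I would unwind $\Sigma(K)=K\diamond\Delta^{0}/K$ into an explicit colimit. Since $K\diamond\Delta^{0}$ is the cone on $K$ --- the cylinder $K\times\Delta^{1}$ with its top face $K\times\{1\}$ collapsed --- collapsing the remaining copy of $K$ at the bottom produces the double cone, so that
\[
\Sigma(K)\;\cong\;(K\times\Delta^{1})\sqcup_{K\times\partial\Delta^{1}}\partial\Delta^{1},
\]
the canonical map $\partial\Delta^{1}\to\Sigma(K)$ picking out the two cone points. In particular this exhibits $\Sigma$ as a colimit-preserving functor into $\sset_{\partial\Delta^{1}/}$; note that $\Sigma(\es)=\partial\Delta^{1}$ rather than $\es$, a point that will matter below.

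The key elementary identity is that for all simplicial sets $A\ss B$, $D$ and every $E$,
\[
(B\rtimes_{A}D)\times E\;\cong\;(B\times E)\rtimes_{A\times E}D,
\]
because $(-)\times E$ preserves the defining pushout of $B\rtimes_{A}D$ and products of simplicial sets commute up to canonical isomorphism. Applying this with $E=\Delta^{1}$ and $E=\partial\Delta^{1}$ and substituting into the formula for $\Sigma$ rewrites $\Sigma(B\rtimes_{A}D)$ as a pushout whose corners are $\rtimes_{A}D$-constructions of the products of $A,B,D$ with $\Delta^{1}$ and $\partial\Delta^{1}$. Symmetrically, expanding $\Sigma B\rtimes_{\Sigma A}D$ by inserting the pushout formulas for $\Sigma A$ and $\Sigma B$ and using that $(-)\times D$ preserves colimits presents the right-hand side as an iterated pushout of the very same basic pieces.

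By Fubini for colimits I would then write each side as the colimit of a single finite diagram assembled from $A\times\Delta^{1}\times D$, $A\times\partial\Delta^{1}\times D$, $B\times\Delta^{1}\times D$, $B\times\partial\Delta^{1}\times D$, $A\times\Delta^{1}$, $A\times\partial\Delta^{1}$ and $\partial\Delta^{1}$. The two diagrams coincide except that the presentation of $\Sigma B\rtimes_{\Sigma A}D$ carries two additional copies of $\partial\Delta^{1}\times D$ --- the cone points of $\Sigma A$ and of $\Sigma B$ crossed with $D$ --- which are absent from $\Sigma(B\rtimes_{A}D)$. I expect the sole genuine obstacle to be showing that these extra terms are irrelevant to the colimit. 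Each of them maps, via the projection $\partial\Delta^{1}\times D\to\partial\Delta^{1}$, into a copy of $\partial\Delta^{1}$ already present, compatibly with every structure map out of it; hence in any cocone its component is forced by, and automatically agrees with, the component at $\partial\Delta^{1}$, and deleting it leaves the colimit unchanged. This is precisely the collapse of the two cone-point copies of $D$ onto the cone points, and it is also what makes the identity survive the degenerate case $A=\es$, where $\Sigma A=\partial\Delta^{1}$. Should the cocone bookkeeping become unwieldy, I would instead settle this crux by hand, checking degreewise that the two evident surjections from $B\times D\times\Delta^{1}$ onto the two sides induce the same equivalence relation; the cone points, which lie in $\Sigma A$ for every $A$, are exactly what forces the two relations to agree.
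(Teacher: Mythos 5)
Your overall strategy is sound and is in fact the paper's own: the pushout formula $\Sigma(K)\cong(K\times\Delta^{1})\sqcup_{K\times\partial\Delta^{1}}\partial\Delta^{1}$, the commutation $(B\rtimes_{A}D)\times E\cong(B\times E)\rtimes_{A\times E}D$, and Fubini over a finite diagram built from the same basic pieces --- your nine-object diagram is precisely the $3\times3$ grid in the paper's proof. But the step you yourself flag as the crux is where your argument, as stated, fails. The two extra copies of $\partial\Delta^{1}\times D$ are not symmetric in the assembled diagram: the copy coming from $\Sigma A\times D$ does carry a projection arrow to $\partial\Delta^{1}$ (and an identity arrow to the other copy), but the copy coming from $\Sigma B\times D$ is a \emph{sink} --- there is no morphism out of it in the index category, in particular no projection to $\partial\Delta^{1}$; its collapse onto the cone points is enforced only by the zig-zag $\partial\Delta^{1}\xleftarrow{\mathrm{pr}}\partial\Delta^{1}\times D\xrightarrow{=}\partial\Delta^{1}\times D$ through the $\Sigma A$-copy. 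Consequently, deleting both objects (restricting to the full subdiagram on the remaining seven) genuinely changes the colimit: one loses exactly the relations collapsing the ends of the cylinder on $B\times D$ away from $A$, and the pruned colimit is $\Sigma A\sqcup_{A\times D\times\Delta^{1}}\left(B\times D\times\Delta^{1}\right)$. For instance, with $A=\es$ and $B=D=\Delta^{0}$ this gives $\partial\Delta^{1}\sqcup\Delta^{1}$, whereas $\Sigma(B\rtimes_{A}D)=\Sigma(\Delta^{0})=\Delta^{1}$. The underlying reason is that the correct seven-object presentation of $\Sigma(B\rtimes_{A}D)$ contains collapse arrows $A\times\partial\Delta^{1}\times D\to\partial\Delta^{1}$ and $B\times\partial\Delta^{1}\times D\to\partial\Delta^{1}$ which the pruned grid simply does not have (in the grid there is no morphism at all from $\partial\Delta^{1}\times B\times D$ to $\partial\Delta^{1}$), so ``delete the redundant objects'' is not the right move.

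The gap is repairable along lines you gesture at: instead of deleting objects, compare cocones. In any cocone on the nine-object grid, the component at the $\Sigma A$-copy of $\partial\Delta^{1}\times D$ equals $c_{\partial\Delta^{1}}\circ\mathrm{pr}$ by its projection arrow, and the identity arrow then forces the same for the $\Sigma B$-copy; one checks that restriction to the seven common objects gives a bijection with cocones on the (arrow-richer) seven-object presentation of $\Sigma(B\rtimes_{A}D)$. Your degreewise fallback also works, modulo the degenerate cases $B=\es$ or $D=\es$, where the asserted surjectivity of $B\times D\times\Delta^{1}$ onto both sides fails but the statement is immediate. The paper's proof sidesteps the reconciliation entirely and is worth internalizing: it computes the colimit of the single $3\times3$ grid once by rows and once by columns, and the top row $\partial\Delta^{1}\xleftarrow{\mathrm{pr}}\partial\Delta^{1}\times D\xrightarrow{=}\partial\Delta^{1}\times D$, whose pushout is $\partial\Delta^{1}$, is exactly what absorbs your two extra copies automatically, with no deletion argument needed.
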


\begin{proof}
Consider the following diagram (with the obvious maps) and compute
the colimit, starting once with the rows and once with the columns:
\[
\xymatrix{\partial\Delta^{1} & \partial\Delta^{1}\times D\ar[r]\ar[l] & \partial\Delta^{1}\times D & \partial\Delta^{1}\times\left(\Delta^0\rtimes_{\Delta^0}D\right)\\
\partial\Delta^{1}\times A\ar[d]\ar[u] & \partial\Delta^{1}\times A\times D\ar[r]\ar[l]\ar[d]\ar[u] & \partial\Delta^{1}\times B\times D\ar[d]\ar[u] & \partial\Delta^{1}\times\left(B\rtimes_{A}D\right)\ar[d]\ar[u]\\
\Delta^{1}\times A & \Delta^{1}\times A\times D\ar[r]\ar[l] & \Delta^{1}\times B\times D & \Delta^{1}\times\left(B\rtimes_{A}D\right)\\
\Sigma A & \Sigma A\times D\ar[r]\ar[l] & \Sigma B\times D & \Sigma B\rtimes_{\Sigma A}D\simeq\Sigma\left(B\rtimes_{A}D\right)
}
.
\]
\end{proof}
The following lemma compares the different models of the mapping space.
\begin{lem}
\label{lem:Homotopy_Rel_A}Given simplicial sets $A\ss B$ and two
maps $f,g\colon B\to\hom_{\mathcal{C}}^{R}\left(X,Y\right)$, the following
are equivalent:
\begin{lemenum}
\item $f,g\colon B\to\hom_{\mathcal{C}}^{R}\left(X,Y\right)$ agree on $A$ (resp.
homotopic rel. $A$).
\item $F,G\colon B\to\hom_{\mathcal{C}}^{M}\left(X,Y\right)$ agree on $A$ (resp.
homotopic rel. $A$).
\item $\overline{f},\overline{g}\colon J\left(B\right)\to C$ agree on $J\left(A\right)$
(resp. homotopic rel. $J\left(A\right)$).
\item $\overline{F},\overline{G}\colon \Sigma\left(B\right)\to C$ agree on $\Sigma\left(A\right)$
(resp. homotopic rel. $\Sigma\left(A\right)$).
\end{lemenum}
\end{lem}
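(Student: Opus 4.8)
The plan is to prove that all four conditions are equivalent by establishing the pairwise equivalences (2)$\Leftrightarrow$(4), (1)$\Leftrightarrow$(2), and (3)$\Leftrightarrow$(4) and then chaining them, so that the remaining cases (notably (1)$\Leftrightarrow$(3) for the homotopy relation) follow by transitivity. Throughout, I would run the argument for the strict relation ``agree on $A$'' and the homotopy relation ``homotopic rel.\ $A$'' in parallel. The guiding principle is that the right-hand model $\hom_{\mathcal{C}}^{R}$ is convenient for the adjunction but interacts badly with the cylinder $-\rtimes_{A}\Delta^{1}$ defining homotopies rel.\ $A$, whereas the middle model does not: concretely, $J$ fails to preserve $-\rtimes_{A}\Delta^{1}$, while $\Sigma$ does, by \lemref{Homotopy_Cylinder}. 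This is precisely why the comparison must be routed through the middle mapping space.

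For (2)$\Leftrightarrow$(4) I would argue directly from the adjunction $\hom(K,\hom_{\mathcal{C}}^{M}(X,Y))=\hom_{(X,Y)}(\Sigma(K),\mathcal{C})$. For the ``agree on $A$'' version, naturality in $K$ (using that $\Sigma$ preserves the monomorphism $A\subseteq B$) gives a bijection identifying $F|_{A}=G|_{A}$ with $\overline{F}|_{\Sigma A}=\overline{G}|_{\Sigma A}$. For the ``homotopic rel.\ $A$'' version, a homotopy from $F$ to $G$ rel.\ $A$ is by definition an extension of $F\cup G$ along $B\rtimes_{A}\partial\Delta^{1}\to B\rtimes_{A}\Delta^{1}$; passing through the adjunction and applying \lemref{Homotopy_Cylinder} converts this into an extension along $\Sigma B\rtimes_{\Sigma A}\partial\Delta^{1}\to\Sigma B\rtimes_{\Sigma A}\Delta^{1}$, which is exactly a homotopy from $\overline{F}$ to $\overline{G}$ rel.\ $\Sigma A$. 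Thus both versions are immediate once the cylinder is matched up.

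For (1)$\Leftrightarrow$(2) and (3)$\Leftrightarrow$(4) I would exploit the comparison $\sigma_{K}\colon\Sigma K\to J(K)$ and the induced Kan equivalence $\Phi$, together with the naturality relations $\overline{F}=\overline{f}\circ\sigma_{B}$ and $F=\Phi\circ f$. The essential auxiliary fact is that $\sigma_{K}$ is surjective on simplices (equivalently, $\Phi$ is a monomorphism), which I would read off from the surjectivity on simplices of the canonical map $K\diamond\Delta^{0}\to K\star\Delta^{0}$. Surjectivity disposes of ``agree on $A$'': since precomposition with the epimorphism $\sigma_{A}$ is injective, $\overline{F}|_{\Sigma A}=\overline{G}|_{\Sigma A}$ forces $\overline{f}|_{JA}=\overline{g}|_{JA}$, and dually $\Phi$ being a monomorphism forces $f|_{A}=g|_{A}$ from $F|_{A}=G|_{A}$. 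For ``homotopic rel.\ $A$'' I would instead invoke the invariant description recalled at the start of the section, whereby a homotopy rel.\ $A$ is an equivalence between the two objects in the pullback $\infty$-category $\mathcal{D}$, a \emph{homotopy} pullback since the relevant restriction maps are categorical fibrations. As $\sigma_{B},\sigma_{A}$ are categorical equivalences, exponentiating the fibrant object $\mathcal{C}$ by them yields categorical equivalences of function complexes; forming the homotopy pullback that defines $\mathcal{D}$ then produces a categorical equivalence of the two pullback $\infty$-categories, and likewise $\Phi$ induces a homotopy equivalence on the Kan-complex side. Since an equivalence of $\infty$-categories both preserves and reflects equivalences between objects, the two homotopy relations correspond in each direction.

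The main obstacle I anticipate is exactly the ``homotopic rel.\ $A$'' case. The naive move of transporting a homotopy across the adjunction breaks for the right model because $J$ does not commute with $-\rtimes_{A}\Delta^{1}$; this is what forces the detour through $\Sigma$ and \lemref{Homotopy_Cylinder}. The second delicate point is that $\sigma_{K}$ and $\Phi$ are genuine equivalences rather than isomorphisms, so the strict condition ``agree on $A$'' cannot be transported naively: here the surjectivity of $\sigma_{K}$ is what rescues the argument, while the homotopy relation must be transported through the equivalence-invariant pullback-$\infty$-category description rather than by any direct manipulation of the simplicial sets themselves.
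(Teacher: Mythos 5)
Your proposal is correct and takes essentially the same approach as the paper: the same decomposition into the pairwise equivalences $(1)\Leftrightarrow(2)$, $(3)\Leftrightarrow(4)$, $(2)\Leftrightarrow(4)$, with \lemref{Homotopy_Cylinder} identifying the two extension problems for the homotopy part of $(2)\Leftrightarrow(4)$, the monomorphism/epimorphism observations for the ``agree on $A$'' parts, and the invariant description of homotopy rel.\ $A$ as equivalence in the (homotopy) fiber of a categorical fibration for the remaining homotopy parts. Your only deviations are minor elaborations of steps the paper leaves terse, e.g.\ deducing that $\Phi$ is a monomorphism from the surjectivity of $K\diamond\Delta^{0}\to K\star\Delta^{0}$, and routing the homotopy case of $(1)\Leftrightarrow(2)$ through the pullback-$\infty$-category description where the paper simply cites that $\Phi$ is a homotopy equivalence of Kan complexes.
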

\begin{proof}
We start with the equivalence $\left(1\right)\iff\left(2\right)$. The
first part follows from the fact that $\Phi$ is a monomorphism and
the second part follows from the fact that $\Phi$ is a homotopy equivalence of Kan complexes. 
In the equivalence $\left(3\right)\iff\left(4\right)$,
the first part follows from the fact that $\Sigma A\to J\left(A\right)$
is an epimorphism and the second part can be seen as follows: the
maps $\overline{f},\overline{g}\colon J\left(B\right)\to\mathcal{C}$ are
homotopic rel $J\left(A\right)$ if and only if they are equivalent
as elements of the $\infty$-category that is the fiber over $\overline{f}|_{J\left(A\right)}=\overline{g}|_{J\left(A\right)}$
(which is also a homotopy fiber) of the categorical fibration $\mathcal{C}^{J\left(B\right)}\to\mathcal{C}^{J\left(A\right)}$.
Since we have functorial categorical equivalences $\Sigma\left(A\right)\iso J\left(A\right)$
and $\Sigma\left(B\right)\iso J\left(B\right)$, this is the same
as showing that the corresponding maps $\overline{F},\overline{G}\colon \Sigma\left(B\right)\to\mathcal{C}$
are equivalent in the fiber of $\mathcal{C}^{\Sigma\left(B\right)}\to\mathcal{C}^{\Sigma\left(A\right)}$
(which is also the homotopy fiber). This in turn is the same as having
$\overline{F},\overline{G}$ homotopic rel. $\Sigma A$. It is left
to show the equivalence $\left(2\right)\iff\left(4\right)$. The
first part is clear. The second part amounts to showing the equivalence of two extension problems. 
If $F|_{A}=G|_{A}$, we get a map $F\cup_{A}G$ from 
$B\cup_{A}B\simeq B\rtimes_{A}\partial\Delta^{1}$ to 
$\hom_{\mathcal{C}}^{M}\left(X,Y\right)$
and $F$ and $G$ are homotopic rel. $A$ if and only if $F\cup_{A}G$
extends to the relative cylinder $B\rtimes_{A}\Delta^{1}$. In terms
of maps to $\mathcal{C}$, this is equivalent to the extension problem
\[
\xymatrix{\Sigma\left(B\rtimes_{A}\partial\Delta^{1}\right)\ar[r]\ar[d] & \mathcal{C}\\
\Sigma\left(B\rtimes_{A}\Delta^{1}\right)\ar@{-->}[ru]
}
.
\]
On the other hand, from $\overline{F}|_{\Sigma A}=\overline{G}|_{\Sigma A}$
we get a map $\overline{F}\cup_{\Sigma A}\overline{G}$ from $\Sigma B\rtimes_{\Sigma A}\partial\Delta^{1}$
to $\mathcal{C}$ and $\overline{F}$ and $\overline{G}$ are homotopic
rel. $\Sigma A$ if and only if it extends to the relative cylinder
$\Sigma B\rtimes_{\Sigma A}\Delta^{1}$. By \lemref{Homotopy_Cylinder}
for $D=\Delta^{1},\partial\Delta^{1}$, the two extension problems
are isomorphic.
\end{proof}
We are now ready to prove \propref{h_d_mapping_spaces_cat}.
\begin{proof}[Proof (of \propref{h_d_mapping_spaces_cat})]
For $d\le0$ this follows directly from the definitions, and so we assume that $d\ge1$. Let $K$ be a simplicial set. On the one hand, 
\begin{eqnarray*}
\hom\left(K,\hom_{h_{d}\mathcal{C}}^{R}\left(X,Y\right)\right) & = & \hom_{\left(X,Y\right)}\left(J\left(K\right),h_{d}\mathcal{C}\right)\\
 & = & \left[J\left(K\right)^{d-1},J\left(K\right)^{d},J\left(K\right)^{d+1};\mathcal{C}\right]_{\left(X,Y\right)},
\end{eqnarray*}
where subscript $\left(X,Y\right)$ indicates that we take only
the subset of maps that restrict to $\left(X,Y\right)$ on $\partial\Delta^{1}\into J\left(K\right)$
(observe that this is independent of the representative as $\partial\Delta^{1}\ss J\left(K\right)^{d-1}$).
On the other hand, 
\begin{eqnarray*}
\hom\left(K,h_{d-1}\hom_{\mathcal{C}}^{R}\left(X,Y\right)\right) & = & \left[K^{d-2},K^{d-1},K^{d};\hom_{\mathcal{C}}^{R}\left(X,Y\right)\right].
\end{eqnarray*}
We will argue that this last set is in natural bijection with the
set 
\[
\left[J\left(K^{d-2}\right),J\left(K^{d-1}\right),J\left(K^{d}\right),\mathcal{C}\right]_{\left(X,Y\right)}.
\]
First, by definition of the right mapping space we have a natural
bijection between maps of the form $f\colon K^{d-1}\to\hom_{\mathcal{C}}^{R}\left(X,Y\right)$ and maps of the form $\overline{f}\colon J\left(K^{d-1}\right)\to\mathcal{C}$ 
restricting
to $\left(X,Y\right)$ on $\partial\Delta^{1}\ss J\left(K^{d-1}\right)$.
Second, $f$ extends to $K^{d}$ if and only if $\overline{f}$ extends
to $J\left(K^{d}\right)$. Likewise, it is clear that two maps $f,g\colon K^{d-1}\to\hom_{\mathcal{C}}^{R}\left(X,Y\right)$
\emph{agree} on $K^{d-2}$ if and only if the corresponding maps $\overline{f},\overline{g}\colon J\left(K^{d-1}\right)\to\mathcal{C}$
\emph{agree} on $J\left(K^{d-2}\right)$. Hence, the only thing we
need to show is that $f$ and $g$ are homotopic rel. $K^{d-2}$ if
and only if $\overline{f}$ and $\overline{g}$ are homotopic rel.
$J\left(K^{d-2}\right)$ and this follows from $\left(1\right)\iff\left(3\right)$ in \lemref{Homotopy_Rel_A}. 
It remains to observe that for every simplicial set $K$ and every $d\ge1$ we have a canonical
isomorphism $J\left(K^{d-1}\right)\iso J\left(K\right)^{d}.$ Hence,
we get a natural bijection 
\[
\hom\left(K,\hom_{h_{d}\mathcal{C}}^{R}\left(X,Y\right)\right)\simeq\hom\left(K,h_{d-1}\hom_{\mathcal{C}}^{R}\left(X,Y\right)\right)
\]
and therefore an isomorphism $\alpha\colon \hom_{h_{d}\mathcal{C}}^{R}\left(X,Y\right)\simeq h_{d-1}\hom_{\mathcal{C}}^{R}\left(X,Y\right)$. 

Finally, we need to show that the isomorphism we have constructed is compatible with
the maps $\theta\colon \hom_{\mathcal{C}}^{R}\left(X,Y\right)\to h_{d-1}\hom_{\mathcal{C}}^{R}\left(X,Y\right)$ and 
$\beta\colon \hom_{\mathcal{C}}^{R}\left(X,Y\right)\to\hom_{h_{d}\mathcal{C}}^{R}\left(X,Y\right)$. 
For this, consider a map $f\colon K\to\hom_{\mathcal{C}}^{R}\left(X,Y\right)$. 
The composition $\theta\circ f$ is represented by the restriction $f|_{K^{d-1}}$,
which corresponds to the map $\overline{f|_{K^{d-1}}}\colon J\left(K^{d-1}\right)\to\mathcal{C}$.
On the other hand, the composition $\beta\circ f$ corresponds to
the restriction of $\overline{f}\colon J\left(K\right)\to\mathcal{C}$ to
$J\left(K\right)^{d+1}$ and these are identified by $\alpha$.
\end{proof}

\section{$d$-Operads}

We now develop the basic theory of (essentially) $d$-operads in analogy
with (and by bootstrapping of) the theory of $d$-categories. First,
\begin{defn}\label{def:Ess_d_Operad}
Let $d\ge-1$. An \emph{essentially $d$-operad} is an $\infty$-operad
$\mathcal{O}$ such that for all $X_{1},\dots,X_{n},Y\in\underline{\mathcal{O}}$,
the multi-mapping space $\mbox{Mul}_{\mathcal{O}}\left(\left\{ X_{1},\dots,X_{n}\right\} ;Y\right)$
is $\left(d-1\right)$-truncated. We denote by $\opd$ the full subcategory
of $\op$ spanned by essentially $d$ -operads.
\end{defn}

\begin{ex}
\label{exa:Sym_Mon_d_Cat} Two important special cases are:
\begin{paraenum}
\item A symmetric monoidal $\infty$-category is an essentially
$d$-operad if and only if its underlying $\infty$-category is an essentially $d$-category.
\item A reduced $\infty$-operad $\mathcal{P}$ is an essentially $d$-operad
if and only if the corresponding symmetric sequence of $n$-ary operations $\left\{ \mathcal{P}\left(n\right)\right\} _{n\ge0}$ consists of $\left(d-1\right)$-truncated spaces.
\end{paraenum}
\end{ex}
We begin by showing that that the functor $h_{d}$ behaves well with
respect to inner and coCartesian edges.
\begin{prop}
\label{prop:h_d_coCart_edges} Let $d\ge-1$ and let $p\colon \mathcal{C}\to\mathcal{D}$
be a functor, where $\mathcal{C}$ is an $\infty$-category and $\mathcal{D}$
a $d$-category. 
\begin{lemenum}
\item If the functor $p\colon \mathcal{C}\to\mathcal{D}$ is an inner fibration, then so is $h_{d}\left(p\right)\colon h_{d}\left(\mathcal{C}\right)\to h_{d}\left(\mathcal{D}\right)=\mathcal{D}$.
\item If in addition $f$ is a $p$-coCartesian morphism in $\mathcal{C}$,
then $h_{d}\left(f\right)$ is $h_{d}\left(p\right)$-coCartesian
in $h_{d}\mathcal{C}$.
\end{lemenum}
\end{prop}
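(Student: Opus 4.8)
The plan is to treat the two assertions by different methods: part \emph{(1)} by directly verifying the right lifting property against inner horns using the skeletal description of $h_{d}\mathcal{C}$ from \lemref{d_homotopy_category}, and part \emph{(2)} by combining the mapping-space criterion for coCartesian morphisms with an elementary truncation-of-pullbacks lemma fed by \propref{h_d_mapping_spaces_cat}. Throughout I use that, since $\mathcal{D}$ is a $d$-category, \lemref{h_d(c)_universal_property} gives $h_{d}\mathcal{D}=\mathcal{D}$ and $h_{d}(p)\circ\theta_{d}=p$, and that $h_{d}\mathcal{C}$ is itself a $d$-category, in particular an $\infty$-category with inner-horn fillers.

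For \emph{(1)}, fix an inner horn $\Lambda^{n}_{i}\into\Delta^{n}$ (with $0<i<n$) and a lifting problem for $h_{d}(p)$. Recalling $\hom(K,h_{d}\mathcal{C})=[K^{d-1},K^{d},K^{d+1};\mathcal{C}]$, so that $\hom(K,h_{d}\mathcal{C})=\hom(K,\mathcal{C})$ whenever $\dim K\le d-1$, I would split into ranges of $n$. For $n\le d$ the horn data is an honest map $\Lambda^{n}_{i}\to\mathcal{C}$ and a lift is produced by applying the inner-fibration property of $p$ inside $\mathcal{C}$ and passing to $h_{d}\mathcal{C}$; for $n=d+1$ one runs the same idea at the level of representatives, lifting the horn to $\mathcal{C}$ and filling the missing $i$-th face via the RLP of $p$ against $\Lambda^{d+1}_{i}\into\Delta^{d+1}$. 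For $n\ge d+2$ there is a clean shortcut avoiding $p$: since $h_{d}\mathcal{C}$ is an $\infty$-category, the horn $a\colon\Lambda^{n}_{i}\to h_{d}\mathcal{C}$ admits \emph{some} filler $c\colon\Delta^{n}\to h_{d}\mathcal{C}$, and it suffices to check $h_{d}(p)\circ c=b$. Now $h_{d}(p)\circ c$ and $b$ agree on $\Lambda^{n}_{i}$; restricting to the $i$-th face $\Delta^{n-1}$ they agree on its boundary, and since $n-1>d$ and $\mathcal{D}$ is a $d$-category, condition \emph{(2)} of \defref{d_cat} forces agreement on the $i$-th face, hence on all of $\partial\Delta^{n}$, whereupon condition \emph{(2)} with $m=n>d$ gives $h_{d}(p)\circ c=b$. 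Thus every filler works.

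For \emph{(2)}, I would invoke the standard mapping-space criterion: $h_{d}(f)$ is $h_{d}(p)$-coCartesian iff for every $w\in h_{d}\mathcal{C}$ the associated square of right mapping spaces is a homotopy pullback. By essential surjectivity of $\theta_{d}$ we may take $w=\theta_{d}(z)$, and by \corref{Theta_d_Mapping_Space_Cat} together with $h_{d}\mathcal{D}=\mathcal{D}$ this square is obtained from the homotopy pullback square witnessing that $f$ is $p$-coCartesian by applying $(d-1)$-truncation to its top row while leaving the bottom row, which consists of the already $(d-1)$-truncated mapping spaces of $\mathcal{D}$, unchanged. The remaining input is the lemma that if $P\simeq A\times_{C}B$ is a homotopy pullback of spaces with $B$ and $C$ being $(d-1)$-truncated, then $\tau_{\le d-1}P\simeq(\tau_{\le d-1}A)\times_{C}B$. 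This holds because $P\to(\tau_{\le d-1}A)\times_{C}B$ is a base change of the truncation map $A\to\tau_{\le d-1}A$, hence has $(d-1)$-connected fibers, while its target is $(d-1)$-truncated as a limit of $(d-1)$-truncated spaces; a map with $(d-1)$-connected fibers and $(d-1)$-truncated target is precisely a $(d-1)$-truncation map.

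The main obstacle I anticipate is the borderline case $n=d+1$ of part \emph{(1)}: there the rigidity argument above fails, since agreement of two maps on $\partial\Delta^{d}$ does \emph{not} force equality in a $d$-category (only condition \emph{(1)} of \defref{d_cat} is available), so one genuinely has to lift the horn through $\theta_{d}$ to a representative in $\mathcal{C}$ and use the inner-fibration property of $p$, all while tracking the homotopies-rel-$\mathrm{sk}^{d-1}$ that identify representatives. The rest of part \emph{(1)} and all of part \emph{(2)} are then bookkeeping around \lemref{d_homotopy_category}, \corref{Theta_d_Mapping_Space_Cat}, and the truncation lemma.
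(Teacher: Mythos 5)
Your proposal is correct in substance but takes a genuinely different route from the paper, most visibly in part \emph{(2)}. The paper proves both assertions simultaneously and purely combinatorially: using the horn-lifting characterization of coCartesian edges (T.2.4.1.4), it considers lifting problems $\Lambda^m_i \into \Delta^m$ and splits at $m = d+2$. For $m \ge d+3$ the skeleta of $\Lambda^m_i$ and $\Delta^m$ agree through dimension $d+1$, so restriction $\hom(\Delta^m, h_d\mathcal{C}) \to \hom(\Lambda^m_i, h_d\mathcal{C})$ is a \emph{bijection} and there is nothing to prove; for $m \le d+2$ the horn equals its $(d+1)$-skeleton, so the horn lifts through $\theta_d$ to $\mathcal{C}$, and --- this is the one extra observation needed for the $i=0$ case --- since $\theta_d$ identifies only homotopic morphisms (for $d \ge 1$), the lifted initial edge is homotopic to $f$, hence still $p$-coCartesian, so the problem is solved in $\mathcal{C}$ and pushed forward. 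Your part \emph{(1)} is a mild variant of this (your rigidity argument via condition \emph{(2)} of \defref{d_cat} handles $n \ge d+2$ without lifting, one dimension better than the paper's bijection range, while for $n \le d+1$ you lift exactly as the paper does); your part \emph{(2)} is genuinely different, replacing the horn-by-horn argument with the invariant criterion T.2.4.4.3, \corref{Theta_d_Mapping_Space_Cat}, and a truncation-of-pullbacks lemma. Your lemma and its proof are sound: the base-changed map has $(d-1)$-connected nonempty fibers and $(d-1)$-truncated target (mapping spaces of the $d$-category $\mathcal{D}$ are indeed $(d-1)$-truncated), hence is a $(d-1)$-truncation map. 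What the paper's approach buys is uniformity and self-containedness --- one lifting trick disposes of both statements with no homotopy-coherence bookkeeping; what yours buys is conceptual transparency for \emph{(2)} and independence from the specific horn model of coCartesianness.

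Two points you should patch. First, your identification of the $h_d\mathcal{C}$-square of T.2.4.4.3 with the truncation of the $\mathcal{C}$-square is asserted but not argued: \propref{h_d_mapping_spaces_cat} identifies the \emph{corners} compatibly with $\beta$ and $\gamma$, but you also need the horizontal composition-with-$f$ maps and the vertical maps to be compatible with $\theta_d$; the verticals are strictly compatible since $h_d(p)\circ\theta_d = p$, and the horizontals follow from the strict naturality in $\mathcal{C}$ of the slice-category construction underlying T.2.4.4.3 (the map $\mathcal{C}_{f/} \to (h_d\mathcal{C})_{h_d(f)/}$ induced by $\theta_d$), but this deserves a sentence. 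Second, your arguments presuppose $d \ge 1$: the skeletal description $\hom(K, h_d\mathcal{C}) = \left[K^{d-1},K^{d},K^{d+1};\mathcal{C}\right]$ from \lemref{d_homotopy_category} is unavailable for $d = 0, -1$, so those cases need the separate (trivial) verification the paper dispatches in one line --- for $d=0$ every map of poset nerves is an inner fibration, and for $d=-1$ everything is immediate.
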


\begin{proof}
For $d=-1,0$, both assertions are trivial to check and so we assume that $d\ge1$.
The argument that $h_{d}\left(p\right)$ is an inner fibration is similar to the argument that $h_{d}\left(f\right)$ is coCartesian and so we shall prove
them together. Using T.2.4.1.4, we need to consider the lifting problem

\[
\LP{\Lambda_{i}^{m}}{\Delta^{m}}{h_{d}\mathcal{C}}{\mathcal{D}}
\]
for some $m\ge2$ and either 
\begin{paraenum}
\item $0<i<m$ or 
\item $i=0$ and $\Delta^{\left\{ 0,1\right\} }\ss\Lambda_{0}^{m}$ is mapped
in $h_{d}\mathcal{C}$ to $h_{d}\left(f\right)$. 
\end{paraenum}
For $m\ge d+3$, we have $\mbox{sk}^{j}\Lambda_{i}^{m}=\mbox{sk}^{j}\Delta^{m}$
for all $j\le d+1$, and so the map 
\[
\hom\left(\Delta^{m},h_{d}\mathcal{C}\right)\to\hom\left(\Lambda_{i}^{m},h_{d}\mathcal{C}\right)
\]
is a bijection and there is nothing to prove. For $m\le d+2$, we
have $\Lambda_{i}^{m}=\mbox{sk}^{d+1}\Lambda_{i}^{m}$, and so the map
\[
\hom\left(\Lambda_{i}^{m},\mathcal{C}\right)\onto\hom\left(\Lambda_{i}^{m},h_{d}\mathcal{C}\right)
\]
is surjective, hence the map $\Lambda_{i}^{m}\to h_{d}\mathcal{C}$
factors through $\Lambda_{i}^{m}\to\mathcal{C}$. Now, the functor
$\mathcal{C}\to h_{d}\mathcal{C}$ identifies only homotopic morphisms
(for $d\ge1$); hence in (2) the image of $\Delta^{\left\{ 0,1\right\}}$
in $\mathcal{C}$ is coCartesian. Thus, in both cases we can solve the
corresponding lifting problem in $\mathcal{C}$, which induces a lift
in the original square.
\end{proof}
\begin{defn}\label{def:d_operad}
Let $\mathcal{O}$ be an $\infty$-operad.
\begin{paraenum}
\item For $d\ge1$, we say that $\mathcal{O}$ is a \emph{$d$-operad} if $\mathcal{O}^{\otimes}$
is a $d$-category. 
\item We say that $\mathcal{O}$ is a \emph{$0$-operad} if $\mathcal{O}^{\otimes}$ is a skeletal
$1$-category and $p$ is faithful.
\item We say that $\mathcal{O}$ is a \emph{$(-1)$-operad} if either $\mathcal{O}^{\otimes}=\es$
or $p$ is an isomorphism.
\end{paraenum}
\end{defn}

\begin{rem}
A $d$-operad is intended to bear the same relation to an essentially
$d$-operad as a $d$-category does to an essentially $d$-category; i.e.
it is a strict model for an $\infty$-operad in which all multi-mapping
spaces are $\left(d-1\right)$-truncated.
\end{rem}

Next, we define the notion of a $d$-homotopy operad of an $\infty$-operad,
which is analogous to the notion of a $d$-homotopy category of an
$\infty$-category. 
\begin{defn}\label{def:d_homotopy_operad}
Given an $\infty$-operad $p\colon \mathcal{O}^{\otimes}\to\finpt$, we
define its \emph{$d$-homotopy operad} $h_{d}\mathcal{O}$ to be a
map of simplicial sets $p_{d}\colon \left(h_{d}\mathcal{O}\right)^{\otimes}\to\finpt$
defined as follows:
\begin{paraenum}
\item For $d\ge1$, we simply apply $h_{d}$ to $p$ as a functor between
$\infty$-categories and use the fact that $\finpt$ is a $1$-category;
hence there is a canonical isomorphism $h_{d}\left(\finpt\right)\simeq\finpt$.
\item For $d=0$, we first construct the (ordinary) category $\tilde{h}_{0}\mathcal{O}^{\otimes}$
whose objects are those of $\mathcal{O}^{\otimes}$ and each mapping
space is replaced by its image in $\finpt$. Then we identify isomorphic
objects in $\tilde{h}_{0}\mathcal{O}^{\otimes}$ (note that there
is a unique induced composition, since isomorphic objects are mapped
to the same object in $\finpt$) and finally we define $\left(h_{0}\mathcal{O}\right)^{\otimes}$
to be the nerve of the resulting category, with $p_{0}$ being the obvious
map to $\finpt$.
\item For $d=-1$, we define $p_{d}\colon \finpt\to\finpt$ to be the identity functor if $\mathcal{O}^{\otimes}\neq\es$ and the unique functor $p_{d}\colon \es\to\finpt$ otherwise.
\end{paraenum}
\end{defn}

In all three cases we have a canonical map of simplicial sets $\theta_{d}\colon \mathcal{O}^{\otimes}\to\left(h_{d}\mathcal{O}\right)^{\otimes}$
over $\finpt$.

\begin{war}
For every $\infty$-operad $\mathcal{O}$ and $d\ge1$
we have $\left(h_{d}\mathcal{O}\right)^{\otimes}\simeq h_{d}\left(\mathcal{O}^{\otimes}\right)$,
but for $d\le0$ we get something slightly different. The reason for this is
that $\left(h_{d}\mathcal{C}\right)^{\otimes}$ corresponds to the
application of $h_{d}$ \emph{fiber-wise} to the map $p\colon \mathcal{O}^{\otimes}\to\finpt$.
Since $\finpt$ is a 1-category, for $d\ge1$ this is the same as
applying $h_{d}$ to $p$, but for $d\le0$ it is not.
\end{war}

\begin{lem}
Let $p\colon \mathcal{O}^{\otimes}\to\finpt$ be an $\infty$-operad.

\begin{lemenum}
\item The map $p_{d}\colon \left(h_{d}\mathcal{O}\right)^{\otimes}\to\finpt$
is a $d$-operad.
\item The canonical map $\theta_{d}\colon \mathcal{O}\to h_{d}\mathcal{O}$ is a map of $\infty$-operads.
\item Given an $\infty$-operad map $F\colon \mathcal{O}\to\mathcal{U}$, the induced map $h_{d}F\colon h_{d}\mathcal{O}\to h_{d}\mathcal{U}$ on $d$-homotopy operads, is an $\infty$-operad map.
\end{lemenum}
\end{lem}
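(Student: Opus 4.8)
The plan is to check, for each part, the relevant definition directly, splitting throughout into the generic range $d\ge1$, where the machinery of the previous section applies, and the degenerate cases $d\le0$, which are read off the explicit construction of \defref{d_homotopy_operad}. Recall that by \defref{d_operad} being a $d$-operad presupposes being an $\infty$-operad, so the substance of part (1) is to re-establish the axioms of A.2.1.1.10 after passing to the $d$-homotopy operad, while parts (2) and (3) reduce to checking the two conditions defining a map of $\infty$-operads: being a map over $\finpt$ and preserving inert morphisms.

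For part (1) with $d\ge1$ we have $(h_d\mathcal{O})^\otimes=h_d(\mathcal{O}^\otimes)$, which is a $d$-category by \lemref{h_d(c)_universal_property}, so it remains only to verify the $\infty$-operad axioms for $p_d=h_d(p)$. Since $\finpt$ is a $1$-category, hence a $d$-category, \propref{h_d_coCart_edges} applies to $p\colon\mathcal{O}^\otimes\to\finpt$ and shows at once that $p_d$ is an inner fibration and that $\theta_d$ carries $p$-coCartesian edges to $p_d$-coCartesian edges. The inert-lifting and existence-of-objects axioms then follow by lifting the given objects along $\theta_d$ (which is bijective on objects) and pushing the $p$-coCartesian lifts produced in $\mathcal{O}^\otimes$ forward to $p_d$-coCartesian lifts. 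The essential point is the mapping-space (Segal) axiom: here I would invoke \corref{Theta_d_Mapping_Space_Cat}, which identifies $\map_{h_d\mathcal{O}^\otimes}(\theta_d C,\theta_d C')$ with the $(d-1)$-truncation of $\map_{\mathcal{O}^\otimes}(C,C')$. Because the morphism sets of $\finpt$ are discrete, this truncation respects the decomposition of the multi-mapping space into components lying over a fixed morphism of $\finpt$, and because $(d-1)$-truncation preserves both coproducts and finite products, applying $\tau_{\le d-1}$ to the Segal equivalence for $\mathcal{O}^\otimes$ yields precisely the Segal equivalence for $p_d$.

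The cases $d\le0$ of part (1) are handled by inspection: for $d=-1$ the total space is $\es$ or $\finpt$ with $p_{-1}$ the identity, giving the initial resp.\ terminal $\infty$-operad, which is a $(-1)$-operad by definition; for $d=0$ the construction forces $(h_0\mathcal{O})^\otimes$ to be a skeletal $1$-category with $p_0$ faithful (each hom-set being a subset of a hom-set of $\finpt$), and the $\infty$-operad axioms follow from the explicit description. Parts (2) and (3) are then formal. By construction $\theta_d$ is a map over $\finpt$, and for $d\ge1$ it sends inert morphisms — the $p$-coCartesian edges lying over inert morphisms of $\finpt$ — to $p_d$-coCartesian edges over the same morphisms by \propref{h_d_coCart_edges}, hence to inert morphisms; the degenerate cases are immediate. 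For part (3), functoriality of $h_d$ (\propref{h_d_left_adjoint}, \thmref{d-homotopy_category}) together with $h_d(\finpt)=\finpt$ makes $h_dF$ a map over $\finpt$; to see it preserves inert morphisms, I would lift a given inert morphism of $h_d\mathcal{O}^\otimes$ to an inert morphism $g$ of $\mathcal{O}^\otimes$ (using the inert-lifting axiom, with $\theta_d g$ the chosen morphism by essential uniqueness of coCartesian edges), observe that $F(g)$ is inert since $F$ is a map of operads, and conclude from the naturality identity $\theta_d\circ F=h_dF\circ\theta_d$ and \propref{h_d_coCart_edges} that $h_dF(g)$ is inert.

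The main obstacle I anticipate is the Segal axiom in part (1): one must argue carefully that $(d-1)$-truncation of the multi-mapping space of $\mathcal{O}^\otimes$ is simultaneously compatible with the coproduct decomposition over the discrete morphism sets of $\finpt$ and with the $n$-fold product in A.2.1.1.10, and that the objects $C'_i$ witnessing the Segal condition in $h_d\mathcal{O}^\otimes$ are exactly the $\theta_d$-images of those in $\mathcal{O}^\otimes$. A secondary subtlety is the lifting bookkeeping in part (3), where essential uniqueness of coCartesian edges in the $d$-category $h_d\mathcal{O}^\otimes$ is needed to match the chosen inert lift with the given inert morphism.
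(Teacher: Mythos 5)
Your proposal is correct and follows essentially the same route as the paper: you verify the three axioms of A.2.1.1.10 by using \propref{h_d_coCart_edges} for the inert-lift and object-decomposition conditions and the identification $\map_{\left(h_{d}\mathcal{O}\right)^{\otimes}}^{f}\left(X,Y\right)\simeq h_{d-1}\left(\map_{\mathcal{O}^{\otimes}}^{f}\left(X,Y\right)\right)$ together with preservation of finite products for the Segal condition, handling $d\le 0$ by inspection, and your arguments for (2) and (3) --- including the lift-and-compare step via uniqueness of coCartesian lifts --- are exactly the paper's. The only cosmetic differences are that you invoke \corref{Theta_d_Mapping_Space_Cat} where the paper uses \propref{h_d_mapping_spaces_cat} directly (the former is a consequence of the latter), and that for $d=0$ the paper spells out the coCartesian-lift verification via the dual of T.2.4.4.3 where you defer to the explicit description.
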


\begin{proof}
For $d=-1$, there is nothing to prove in (1)\textendash (3) and so we assume that $d\ge0$. 

(1) For $d=0$, it is clear that $\left(h_{0}\mathcal{\mathcal{O}}\right)^{\otimes}$
is a skeletal $1$-category, with $p_{0}$ fully faithful; and for $d\ge1$,
it is clear that $\left(h_{d}\mathcal{\mathcal{O}}\right)^{\otimes}$
is a $d$-category. Hence, we only need to show that $\left(h_{d}\mathcal{\mathcal{O}}\right)^{\otimes}$
is an $\infty$-operad. For this we need to check the three conditions
of Definition A.2.1.1.10. 
\begin{itemize}
\item Since $p\colon \mathcal{O}^{\otimes}\to\finpt$ is an $\infty$-operad,
for every inert morphism $f\colon \left\langle m\right\rangle \to\left\langle n\right\rangle $
and an object $\overline{X}\in h_{d}\mathcal{O}_{\left\langle m\right\rangle }^{\otimes}$,
we can lift $\overline{X}$ to $X\in\mathcal{O}_{\left\langle m\right\rangle }^{\otimes}$
and find a coCartesian lift $g\colon X\to Y$ of $f$ in $\mathcal{O}^{\otimes}$.
For $d\ge1$, the image $\overline{g}$ of $g$ in $\left(h_{d}\mathcal{O}\right)^{\otimes}$
is a coCartesian lift of $f$ by \propref{h_d_coCart_edges}. For $d=0$,
we use the dual of T.2.4.4.3 to show that $\overline{g}$ is coCartesian.
$\left(h_{0}\mathcal{O}\right)^{\otimes}\to\finpt$ is an inner fibration
(as the nerve of a functor of ordinary categories) and for every $\overline{Z}\in\left(h_{0}\mathcal{O}\right)_{\left\langle m\right\rangle }^{\otimes}$,
pre-composition with $\overline{g}$ induces a diagram 
\[
\xymatrix{\map_{\left(h_{0}\mathcal{O}\right)^{\otimes}}\left(\overline{Y},\overline{Z}\right)\ar[d]\ar[r] & \map_{\left(h_{0}\mathcal{O}\right)^{\otimes}}\left(\overline{X},\overline{Z}\right)\ar[d]\\
\map_{\finpt}\left(\left\langle m\right\rangle ,\left\langle k\right\rangle \right)\ar[r] & \map_{\finpt}\left(\left\langle n\right\rangle ,\left\langle k\right\rangle \right)
}
,
\]
and it is easy to verify that it is a homotopy pullback.
\item Let $\overline{X}\in\left(h_{d}\mathcal{O}\right)_{\left\langle m\right\rangle }^{\otimes}$
and $\overline{Y}\in\left(h_{d}\mathcal{O}\right)_{\left\langle n\right\rangle }^{\otimes}$
and let 
$f\colon \left\langle m\right\rangle \to\left\langle n\right\rangle$
be a morphism in $\finpt$. We first observe that 
\[
\map_{\left(h_{d}\mathcal{O}\right)^{\otimes}}^{f}\left(X,Y\right)\simeq h_{d-1}\left(\map_{\mathcal{O}^{\otimes}}^{f}\left(X,Y\right)\right).
\]
For $d\ge1$ this follows from \propref{h_d_mapping_spaces_cat} and
for $d=0$ it follows directly from the definition. Hence,
\begin{eqnarray*}
\map_{\left(h_{d}\mathcal{O}\right)^{\otimes}}^{f}\left(X,Y\right) & \simeq & h_{d-1}\left(\map_{\mathcal{O}^{\otimes}}^{f}\left(X,Y\right)\right)\simeq h_{d-1}\left(\prod_{1\le i\le n}\map_{\mathcal{O}^{\otimes}}^{\rho^{i}\circ f}\left(X,Y_{i}\right)\right)\\
 & \simeq & \prod_{1\le i\le n}h_{d-1}\left(\map_{\mathcal{O}^{\otimes}}^{\rho^{i}\circ f}\left(X,Y_{i}\right)\right)\simeq\prod_{1\le i\le n}\map_{\left(h_{d}\mathcal{O}\right)^{\otimes}}^{\rho^{i}\circ f}\left(X,Y_{i}\right).
\end{eqnarray*}

Note that we use the fact that $h_{d}$ preserves finite products of spaces.
\item For every finite collection of objects $\overline{X}_{1},\dots,\overline{X}_{n}\in\left(h_{d}\mathcal{O}\right)_{\left\langle 1\right\rangle }^{\otimes}$
that are lifted to objects of $\mathcal{O}_{\left\langle 1\right\rangle }^{\otimes}$,
there is an object $X\in\mathcal{O}_{\left\langle n\right\rangle }^{\otimes}$
and coCartesian morphisms $f_{i}\colon X\to X_{i}$ covering $\rho^{i}\colon \left\langle n\right\rangle \to\left\langle 1\right\rangle $.
The images of those maps in $h_{d}\mathcal{O}^{\otimes}$ are coCartesian
as well and satisfy the analogous property.
\end{itemize}
(2) From the proof of (1), $\theta_{d}$ maps inert morphisms in $\mathcal{O}^{\otimes}$
to inert morphisms in $h_{d}\mathcal{O}^{\otimes}$.

(3) We need to show that $h_{d}F$ maps inert morphisms to inert morphisms.
For $d=0$, this is automatic. For $d\ge1$, let $\overline{f}\colon X\to Y$
be an inert morphism in $\left(h_{d}\mathcal{O}\right)^{\otimes}$.
There is a coCartesian morphism $f\colon X\to Y'$ in $\mathcal{O}^{\otimes}$
with the same image as $\overline{f}$ in $\finpt$; hence its image
in $\left(h_{d}\mathcal{O}\right)^{\otimes}$ is equivalent to $f$.
Since the composition $\mathcal{O}^{\otimes}\to\mathcal{U}^{\otimes}\to\left(h_{d}\mathcal{U}\right)^{\otimes}$
preserves inert morphisms, it follows that the image of $f$ in $\left(h_{d}\mathcal{U}\right)^{\otimes}$
is inert and since the image of $\overline{f}$ in $\left(h_{d}\mathcal{U}\right)^{\otimes}$
is equivalent to the image of $f$, it is inert as well.
\end{proof}

The following lemma provides the universal property of $\theta_d$ by analogy with \lemref{h_d(c)_universal_property} for $d$-categories.
\begin{lem}
\label{lem:theta_d_universal_property_operad}Let $\mathcal{O}$ be
an $\infty$-operad.
\begin{lemenum}
\item $\mathcal{O}$ is a $d$-operad if and only if $\theta_{d}$ is an
isomorphism.
\item For every $d$-operad $\mathcal{U}$, pre-composition with $\theta_{d}$
induces an isomorphism of simplicial sets
\[
\underline{\alg}_{h_{d}\mathcal{O}}\left(\mathcal{U}\right)\to\underline{\alg}_{\mathcal{O}}\left(\mathcal{U}\right)
\]
and in particular a homotopy equivalence 
\[
\map_{\op}\left(h_{d}\mathcal{O},\mathcal{U}\right)\to\map_{\op}\left(\mathcal{O},\mathcal{U}\right).
\]
\end{lemenum}
\end{lem}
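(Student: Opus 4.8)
The plan is to establish part (1) first and then bootstrap it, together with the universal property of $h_d$ for $\infty$-categories (\lemref{h_d(c)_universal_property}), into part (2). For (1) with $d\ge1$ we have $(h_d\mathcal{O})^{\otimes}=h_d(\mathcal{O}^{\otimes})$ and $\theta_d$ is the canonical map of $\infty$-categories, so \lemref{h_d(c)_universal_property}(2) tells us $\theta_d$ is an isomorphism if and only if $\mathcal{O}^{\otimes}$ is a $d$-category, which is exactly \defref{d_operad}(1). For $d=0$ and $d=-1$ I would read off the condition directly from the construction in \defref{d_homotopy_operad}: $\theta_0$ is an isomorphism precisely when passing to images in $\finpt$ and identifying isomorphic objects are both trivial, i.e. when $\mathcal{O}^{\otimes}$ is a skeletal $1$-category with $p$ faithful; and $\theta_{-1}$ is an isomorphism precisely when $\mathcal{O}^{\otimes}=\es$ or $p$ is an isomorphism. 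These are \defref{d_operad}(2) and (3).

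For (2) with $d\ge1$, recall that $\underline{\alg}_{\mathcal{O}}(\mathcal{U})$ is the full subcategory of $\fun_{\finpt}(\mathcal{O}^{\otimes},\mathcal{U}^{\otimes})$ spanned by the $\infty$-operad maps. The first step is to upgrade the absolute isomorphism of \lemref{h_d(c)_universal_property}(3) to one over $\finpt$. Since $\mathcal{U}$ is a $d$-operad, $\mathcal{U}^{\otimes}$ is a $d$-category, and since $\finpt$ is a $1$-category it is a $d$-category for every $d\ge1$; hence both $\theta_d^{\ast}\colon\fun((h_d\mathcal{O})^{\otimes},\mathcal{U}^{\otimes})\to\fun(\mathcal{O}^{\otimes},\mathcal{U}^{\otimes})$ and $\theta_d^{\ast}\colon\fun((h_d\mathcal{O})^{\otimes},\finpt)\to\fun(\mathcal{O}^{\otimes},\finpt)$ are isomorphisms. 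These assemble into a commuting square whose vertical maps are $\theta_d^{\ast}$ and whose horizontal maps are postcomposition with the structure map $q\colon\mathcal{U}^{\otimes}\to\finpt$. Because $\theta_d^{\ast}$ on the base carries $p_d$ to $p_d\circ\theta_d=p$, taking the strict fibers of $q_{\ast}$ over $p_d$ and over $p$ yields an isomorphism $\fun_{\finpt}((h_d\mathcal{O})^{\otimes},\mathcal{U}^{\otimes})\iso\fun_{\finpt}(\mathcal{O}^{\otimes},\mathcal{U}^{\otimes})$.

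The second step is to check that this isomorphism restricts to the full subcategories of operad maps; as it is an isomorphism of simplicial sets, it is enough to match the vertices. If $g$ preserves inert morphisms then so does $g\circ\theta_d$, because $\theta_d$ is an operad map by the preceding lemma. Conversely, if $g\circ\theta_d$ preserves inert morphisms then, as in the proof of the preceding lemma, every inert morphism $\bar{e}$ of $(h_d\mathcal{O})^{\otimes}$ is equivalent to the image under $\theta_d$ of an inert morphism $e$ of $\mathcal{O}^{\otimes}$, whence $g(\bar{e})\simeq(g\circ\theta_d)(e)$ is inert and so $g(\bar{e})$ is inert, showing $g$ is an operad map. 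This produces the isomorphism $\underline{\alg}_{h_d\mathcal{O}}(\mathcal{U})\iso\underline{\alg}_{\mathcal{O}}(\mathcal{U})$ for $d\ge1$, and passing to maximal Kan subcomplexes gives the asserted homotopy equivalence on $\map_{\op}$. I expect this inert-preservation step, together with the fibered upgrade, to be the crux: the universal property we are handed concerns \emph{all} functors, whereas we need it for maps over $\finpt$ that in addition respect inert morphisms.

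Finally, for (2) with $d\le0$ the $d$-homotopy operad is no longer $h_d(\mathcal{O}^{\otimes})$, so I would argue separately. For $d=0$, any $0$-operad $\mathcal{U}$ is in particular a $1$-operad, so factoring $\theta_0$ as $\mathcal{O}^{\otimes}\xrightarrow{\theta_1}h_1\mathcal{O}^{\otimes}\to(h_0\mathcal{O})^{\otimes}$ reduces the claim, via the $d=1$ case just proven, to showing that every operad map $h_1\mathcal{O}^{\otimes}\to\mathcal{U}^{\otimes}$ factors uniquely through $(h_0\mathcal{O})^{\otimes}$. This is a purely $1$-categorical statement, valid because $\mathcal{U}^{\otimes}$ is skeletal with $p$ faithful, forcing any such functor over $\finpt$ to identify isomorphic objects and to depend only on the images of morphisms in $\finpt$. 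The case $d=-1$ is immediate from the definitions, as both $\mathcal{U}^{\otimes}$ and $(h_{-1}\mathcal{O})^{\otimes}$ are then either empty or equal to $\finpt$.
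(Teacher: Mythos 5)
Your proposal is correct, and it diverges from the paper's proof in two genuine ways, while the core of part (2) for $d\ge1$ is the same argument: like the paper, you deduce an isomorphism on $\fun_{\finpt}$ from the universal property of $h_{d}$ for $\infty$-categories, and then match the full subcategories of operad maps using that $\theta_{d}$ preserves inert morphisms while every inert morphism of $\left(h_{d}\mathcal{O}\right)^{\otimes}$ is, up to equivalence, the image of an inert morphism of $\mathcal{O}^{\otimes}$ (lift the source and take a coCartesian lift). You additionally spell out the fibered upgrade --- taking strict fibers of $q_{*}$ over $p_{d}$ and over $p$, using that $\finpt$ is itself a $d$-category for $d\ge1$ --- which the paper compresses into the phrase ``by the analogous fact for $\infty$-categories''. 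The first real divergence is part (1): the paper deduces it from part (2) together with the Yoneda lemma in the $1$-category $\pop$ of $\infty$-preoperads (A.2.1.4.2), whereas you verify it directly, via \lemref{h_d(c)_universal_property}(2) for $d\ge1$ and by inspecting the constructions of \defref{d_homotopy_operad} for $d=0,-1$; your route is more elementary and avoids preoperads entirely, while the paper's is shorter and uniform in $d$. The second is the case $d=0$ of part (2), where $\left(h_{0}\mathcal{O}\right)^{\otimes}\neq h_{0}\left(\mathcal{O}^{\otimes}\right)$ and the categorical universal property does not apply verbatim: the paper asserts that ``essentially the same argument works'', noting only that the inert maps of $\left(h_{0}\mathcal{O}\right)^{\otimes}$ are those with inert image in $\finpt$, whereas your factorization $\mathcal{O}^{\otimes}\to h_{1}\mathcal{O}^{\otimes}\to\left(h_{0}\mathcal{O}\right)^{\otimes}$, reducing to the already-proven $d=1$ case plus a strict $1$-categorical uniqueness statement (skeletality of $\mathcal{U}^{\otimes}$ forces identification of isomorphic objects; faithfulness of the projection forces morphisms to depend only on their $\finpt$-images), supplies exactly the detail the paper elides. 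Two minor points to tighten there: the factorization claim must hold on all simplices of $\fun_{\finpt}$, i.e.\ for functors $h_{1}\mathcal{O}^{\otimes}\times\Delta^{n}\to\mathcal{U}^{\otimes}$ over $\finpt$ and not just on vertices --- the same faithfulness/skeletality argument applies verbatim, since everything in sight is the nerve of an ordinary category; and you should record that the comparison functor $h_{1}\mathcal{O}^{\otimes}\to\left(h_{0}\mathcal{O}\right)^{\otimes}$ itself preserves inert morphisms (every inert of $h_{1}\mathcal{O}^{\otimes}$ is equivalent to the image of an inert of $\mathcal{O}^{\otimes}$, whose image in $\left(h_{0}\mathcal{O}\right)^{\otimes}$ is inert as in the proof of the preceding lemma), so that precomposition really carries $\underline{\alg}_{h_{0}\mathcal{O}}\left(\mathcal{U}\right)$ into $\underline{\alg}_{h_{1}\mathcal{O}}\left(\mathcal{U}\right)$.
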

\begin{proof}
(2) Assume that $d\ge1$. By the analogous fact for $\infty$-categories,
the composition with $\theta_{d}$ induces an isomorphism 
\[
\fun_{\finpt}((h_{d}\mathcal{O})^{\otimes},\mathcal{U}^{\otimes})\iso\fun_{\finpt}(\mathcal{O}^{\otimes},\mathcal{U}^{\otimes}).
\]
The simplicial set $\underline{\alg}_{\mathcal{O}}\left(\mathcal{U}\right)$
is the full subcategory of 
$\fun_{\finpt}\left(\mathcal{O}^{\otimes},\mathcal{U}^{\otimes}\right)$
spanned by maps of $\infty$-operads (and similarly for $h_{d}\mathcal{O}$
instead of $\mathcal{O}$). The claim now follows from the fact that
the image of a coCartesian edge in $\mathcal{O}^{\otimes}$ is coCartesian
in $\left(h_{d}\mathcal{O}\right)^{\otimes}$ and, conversely, every
inert morphism in $\left(h_{d}\mathcal{O}\right)^{\otimes}$ is up
to equivalence the image of an inert morphism in $\mathcal{O}^{\otimes}$
(lift the source to some object $X\in\mathcal{O}^{\otimes}$ and choose
any inert map with domain $X$).

For $d=0$, essentially the same argument works, only now the inert
maps of $\left(h_{0}\mathcal{O}\right)^{\otimes}$ are precisely those
whose image in $\finpt$ is inert and therefore the inert maps of
$\left(h_{0}\mathcal{O}\right)^{\otimes}$ are again precisely the
images of inert maps in $\mathcal{O}^{\otimes}$. For $d=-1$, the
claim is obvious.

(1) Follows from (2) and the Yoneda lemma in the 1-category $\pop$ of $\infty$-preoperads (see A.2.1.4.2).
\end{proof}
\begin{lem}
\label{lem:h_d_mapping_spaces_operads}Let $d\ge-1$ and let $\mathcal{O}$ be
an $\infty$-operad. The canonical map $\theta_{d}\colon \mathcal{O}\to h_{d}\mathcal{O}$
is essentially surjective and for all $X_{1},\dots,X_{n},Y\in \underline{\mathcal{O}}$,
the map
\[
\mul_{\mathcal{O}}\left(\left\{ X_{1},\dots,X_{n}\right\} ;Y\right)\to\mul_{h_{d}\mathcal{O}}\left(\left\{ \theta_{d}\left(X_{1}\right),\dots,\theta_{d}\left(X_{n}\right)\right\} ;\theta_{d}\left(Y\right)\right)
\]
is a $\left(d-1\right)$-truncation map.
\end{lem}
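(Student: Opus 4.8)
The plan is to bootstrap from the category case, reducing each multi-mapping space to an ordinary mapping space inside the total $\infty$-category $\mathcal{O}^{\otimes}$ and then invoking \corref{Theta_d_Mapping_Space_Cat}. First I would dispose of the low-dimensional cases directly from the construction of $h_{d}\mathcal{O}$. For $d=-1$, the operad $h_{-1}\mathcal{O}$ is, when non-empty, the terminal operad $\finpt$, all of whose multi-mapping spaces are contractible and hence $(-2)$-truncated; the unique map $\mul_{\mathcal{O}}(\cdots)\to\mathrm{pt}$ is then tautologically a $(-2)$-truncation map. For $d=0$, the construction replaces each mapping set by its image in $\finpt$, so $\mul_{h_{0}\mathcal{O}}(\cdots)$ is a point or empty according to whether $\mul_{\mathcal{O}}(\cdots)$ is non-empty, which is exactly its $(-1)$-truncation. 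In all cases essential surjectivity of $\theta_{d}$ is immediate, since $\theta_{d}$ is surjective on objects (indeed the identity on objects for $d\ge1$).

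It remains to treat $d\ge1$. Here I would use the standard description of the multi-mapping space inside $\mathcal{O}^{\otimes}$: choosing an object $X\in\mathcal{O}^{\otimes}_{\langle n\rangle}$ with components $X_{1},\dots,X_{n}$ and writing $\alpha\colon\langle n\rangle\to\langle1\rangle$ for the active morphism, one has
\[
\mul_{\mathcal{O}}(\{X_{1},\dots,X_{n}\};Y)=\map^{\alpha}_{\mathcal{O}^{\otimes}}(X,Y),
\]
the fiber of $\map_{\mathcal{O}^{\otimes}}(X,Y)\to\map_{\finpt}(\langle n\rangle,\langle1\rangle)$ over $\alpha$. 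Since $\theta_{d}$ preserves inert morphisms, the inert maps $X\to X_{i}$ push forward to inert maps exhibiting $\theta_{d}(X)$ as an object of $(h_{d}\mathcal{O})^{\otimes}_{\langle n\rangle}$ with components $\theta_{d}(X_{1}),\dots,\theta_{d}(X_{n})$; the same formula therefore computes the target multi-mapping space, and the map in the statement is exactly the restriction, to the fibers over $\alpha$, of the map
\[
\map_{\mathcal{O}^{\otimes}}(X,Y)\to\map_{(h_{d}\mathcal{O})^{\otimes}}(\theta_{d}(X),\theta_{d}(Y))
\]
induced by $\theta_{d}$.

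Finally I would apply \corref{Theta_d_Mapping_Space_Cat} to the $\infty$-category $\mathcal{O}^{\otimes}$, noting that for $d\ge1$ one has $(h_{d}\mathcal{O})^{\otimes}=h_{d}(\mathcal{O}^{\otimes})$: the displayed map is then a $(d-1)$-truncation map. Both its source and target lie over the discrete set $\map_{\finpt}(\langle n\rangle,\langle1\rangle)$, and the map is one over $\finpt$; since $d-1\ge0$, truncation is compatible with the resulting decomposition into disjoint summands indexed by that set, so the restriction to the summand over $\alpha$ is again a $(d-1)$-truncation map, which is what we want. The step I expect to be the main (if modest) obstacle is precisely this last bit of bookkeeping: checking that $\theta_{d}(X)$ genuinely carries the prescribed components, so that $\mul_{h_{d}\mathcal{O}}(\cdots)$ is the fiber over $\alpha$ for \emph{this} object, and that passing to a fiber over a point of a discrete base preserves the property of being a $(d-1)$-truncation map.
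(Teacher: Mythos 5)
Your proposal is correct and follows essentially the same route as the paper: the paper's proof also handles $d=-1,0$ directly from the definition of $h_{d}\mathcal{O}$ and, for $d\ge1$, reduces the multi-mapping space statement to the categorical Corollary \corref{Theta_d_Mapping_Space_Cat} applied to $\mathcal{O}^{\otimes}$, using $(h_{d}\mathcal{O})^{\otimes}=h_{d}(\mathcal{O}^{\otimes})$. The fiberwise bookkeeping you flag as the main obstacle is sound (for $d-1\ge0$ truncation commutes with the decomposition over the discrete set $\map_{\finpt}(\langle n\rangle,\langle1\rangle)$) and is in fact already recorded in the paper's earlier verification that $h_{d}\mathcal{O}$ is an $\infty$-operad, where $\map^{f}_{(h_{d}\mathcal{O})^{\otimes}}(X,Y)\simeq h_{d-1}\bigl(\map^{f}_{\mathcal{O}^{\otimes}}(X,Y)\bigr)$ is established.
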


\begin{proof}
The map $\theta_{d}\colon \mathcal{O}\to h_{d}\mathcal{O}$ is surjective
on objects and hence is essentially surjective. For $d\ge1$,
the second assertion follows from the corresponding fact for $\infty$-categories; and for $d=-1,0$, it follows directly from the definition.
\end{proof}
\begin{cor}
\label{cor:char_of_ess_d-operad}An $\infty$-operad \emph{is an essentially
$d$-operad if and only if it is equivalent to a $d$-operad.}
\end{cor}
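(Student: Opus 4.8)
My proposal is to deduce both implications by bootstrapping the two lemmas just established, in exact parallel with the category case, taking the witnessing $d$-operad in the non-trivial direction to be $h_{d}\mathcal{O}$ and comparing $\mathcal{O}$ with it through the unit map $\theta_{d}$.

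For the ``only if'' direction, suppose $\mathcal{O}$ is equivalent to a $d$-operad $\mathcal{P}$. Since an equivalence of $\infty$-operads induces homotopy equivalences on all multi-mapping spaces, the defining condition of \defref{Ess_d_Operad} is invariant under equivalence, so it suffices to check that a $d$-operad is itself essentially $d$. For this I would argue as follows: by \lemref{theta_d_universal_property_operad}\emph{(1)} the map $\theta_{d}\colon\mathcal{P}\to h_{d}\mathcal{P}$ is an isomorphism, hence induces homotopy equivalences on all multi-mapping spaces; but by \lemref{h_d_mapping_spaces_operads} this very map is the $(d-1)$-truncation map on each $\mul_{\mathcal{P}}(\{X_{1},\dots,X_{n}\};Y)$, and a truncation unit that happens to be an equivalence forces its source to be $(d-1)$-truncated. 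Thus every multi-mapping space of $\mathcal{P}$ is $(d-1)$-truncated, i.e. $\mathcal{P}$ is essentially $d$.

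For the converse, suppose $\mathcal{O}$ is an essentially $d$-operad. By \lemref{h_d_mapping_spaces_operads} the unit $\theta_{d}\colon\mathcal{O}\to h_{d}\mathcal{O}$ is essentially surjective and, for every $X_{1},\dots,X_{n},Y$, induces the $(d-1)$-truncation map on multi-mapping spaces; since by hypothesis the source spaces are already $(d-1)$-truncated, each of these maps is a homotopy equivalence. So $\theta_{d}$ is essentially surjective and a homotopy equivalence on all multi-mapping spaces, whence it is an equivalence of $\infty$-operads. As the lemma establishing that $h_{d}\mathcal{O}$ is a $d$-operad shows the target is a $d$-operad, $\mathcal{O}$ is equivalent to one.

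The step that needs the most care is the passage from ``essentially surjective and an equivalence on all multi-mapping spaces'' to ``equivalence in $\op$''; this is the intrinsic criterion for a map of $\infty$-operads to be an equivalence, which I would cite (or verify) as the operadic analogue of ``essentially surjective plus fully faithful.'' For $d\ge1$ there is a concrete alternative that avoids invoking it directly: when $\mathcal{O}$ is essentially $d$, the total $\infty$-category $\mathcal{O}^{\otimes}$ is an essentially $d$-category, since $\map_{\mathcal{O}^{\otimes}}(X,Y)$ decomposes as a finite disjoint union over $\finpt$ of products of multi-mapping spaces and hence is $(d-1)$-truncated; then \thmref{d-homotopy_category} applied to $\mathcal{O}^{\otimes}$ shows $\theta_{d}$ is a categorical equivalence of the total categories, which is exactly an equivalence in $\op$ (using $(h_{d}\mathcal{O})^{\otimes}\simeq h_{d}(\mathcal{O}^{\otimes})$ for $d\ge1$). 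This reduction genuinely breaks down for $d\le0$, where disjoint unions no longer preserve $(d-1)$-truncatedness and $(h_{d}\mathcal{O})^{\otimes}$ is the fiberwise $h_{d}$ rather than $h_{d}(\mathcal{O}^{\otimes})$, so there I would fall back on the intrinsic operadic equivalence criterion — pinning that criterion down is the main obstacle.
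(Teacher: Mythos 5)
Your proposal is correct and is essentially the paper's own (implicit) argument: \corref{char_of_ess_d-operad} is stated without proof immediately following \lemref{h_d_mapping_spaces_operads}, the intended reasoning being exactly yours --- for an essentially $d$-operad the $(d-1)$-truncation maps on multi-mapping spaces are equivalences, so $\theta_{d}\colon\mathcal{O}\to h_{d}\mathcal{O}$ is an operadic equivalence onto the $d$-operad $h_{d}\mathcal{O}$, while the converse is immediate since targets of truncation maps are truncated. The recognition criterion you flag as the main obstacle (essentially surjective plus equivalences on all multi-mapping spaces implies equivalence in $\op$) is the standard fact that the paper likewise takes for granted; it follows from the decomposition of $\map_{\mathcal{O}^{\otimes}}\left(X,Y\right)$ as a coproduct over $\hom_{\finpt}\left(\left\langle m\right\rangle ,\left\langle n\right\rangle \right)$ of products of multi-mapping spaces, which makes $\mathcal{O}^{\otimes}\to\left(h_{d}\mathcal{O}\right)^{\otimes}$ fully faithful and essentially surjective over $\finpt$, hence an equivalence in $\op$, so no case split on $d$ is actually needed.
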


The following is the analogue of \thmref{d-homotopy_category} for $\infty$-operads.
\begin{thm}
\label{thm:d-homotopy_operad} The inclusion $\opd\into\op$ admits
a left adjoint $h_{d}$, such that for every $\infty$-operad $\mathcal{O}$ the value of $h_d$ on $\mathcal{O}$  is the $d$-homotopy operad of $\mathcal{O}$, the unit transformation $\theta_{d}\colon \mathcal{O}\to h_{d}\mathcal{O}$ is essentially surjective, and for all objects
$X_{1},\dots,X_{n},Y\in \underline{\mathcal{O}}$,
the map of spaces
\[
\mul_{\mathcal{O}}\left(\left\{ X_{1},\dots,X_{n}\right\} ;Y\right)\to\mul_{h_{d}\mathcal{O}}\left(\left\{ \theta_{d}\left(X_{1}\right),\dots,\theta_{d}\left(X_{n}\right)\right\} ;\theta_{d}\left(Y\right)\right)
\]
is the $\left(d-1\right)$-truncation map.
\end{thm}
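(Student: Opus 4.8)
The plan is to follow verbatim the two-step structure of the proof of \thmref{d-homotopy_category}: first produce the left adjoint via the localization criterion, and then read off the essential surjectivity and the truncation statement from the already-established behavior on multi-mapping spaces. Thus the operadic analogue of \propref{h_d_left_adjoint} is what has to be proved here, whereas the last two assertions of the theorem are literally the content of \lemref{h_d_mapping_spaces_operads}.

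For the adjoint I would show that the canonical map $\theta_{d}\colon \mathcal{O}\to h_{d}\mathcal{O}$ exhibits $h_{d}\mathcal{O}$ as an $\opd$-localization of $\mathcal{O}$ in the sense of T.5.2.7.6. Two inputs are needed. First, $h_{d}\mathcal{O}$ must lie in $\opd$: it is a $d$-operad by the lemma established above, and a $d$-operad is in particular an essentially $d$-operad, since its multi-mapping spaces are the $\left(d-1\right)$-truncations of those of $\mathcal{O}$ by \lemref{h_d_mapping_spaces_operads} and hence $\left(d-1\right)$-truncated (alternatively, a $d$-operad is equivalent to itself, so it is essentially $d$ by \corref{char_of_ess_d-operad}). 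Second, for every $\mathcal{U}\in\opd$ the map
\[
\theta_{d}^{*}\colon \map_{\op}\left(h_{d}\mathcal{O},\mathcal{U}\right)\to\map_{\op}\left(\mathcal{O},\mathcal{U}\right)
\]
must be a homotopy equivalence, and for a genuine $d$-operad $\mathcal{U}$ this is exactly \lemref{theta_d_universal_property_operad}.

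The one point that requires care is the passage from strict $d$-operads to arbitrary essentially $d$-operads: given $\mathcal{U}\in\opd$, I would use \corref{char_of_ess_d-operad} to replace it by an equivalent $d$-operad and invoke the invariance of $\map_{\op}\left(-,\mathcal{U}\right)$ under equivalences in the target, so that the homotopy equivalence above holds for all of $\opd$ and not merely for strict $d$-operads. Granting the two inputs, T.5.2.7.6 identifies $\theta_{d}$ with the localization map and T.5.2.7.8 then produces a left adjoint $h_{d}\colon \op\to\opd$ to the inclusion whose unit is $\theta_{d}$; in particular its value on $\mathcal{O}$ is the $d$-homotopy operad $h_{d}\mathcal{O}$. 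The remaining two claims, the essential surjectivity of $\theta_{d}$ and the identification of the induced map on each multi-mapping space with the $\left(d-1\right)$-truncation map, are precisely \lemref{h_d_mapping_spaces_operads}. Apart from the equivalence-invariance step just highlighted, the entire argument is a direct transcription of the category-level proof, so I expect no substantive obstacle.
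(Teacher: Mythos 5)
Your proposal is correct and matches the paper's own argument, which simply cites \lemref{h_d_mapping_spaces_operads}, \lemref{theta_d_universal_property_operad} and \corref{char_of_ess_d-operad} and says to proceed ``analogously to the proof for $d$-categories'' --- i.e.\ exactly your route through the localization criterion T.5.2.7.6 and T.5.2.7.8. The equivalence-invariance step you flag (replacing an essentially $d$-operad by an equivalent strict $d$-operad before applying the universal property) is precisely the point the paper leaves implicit, both here and in \propref{h_d_left_adjoint}, so making it explicit is a faithful unpacking rather than a deviation.
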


\begin{proof}
Follows from \lemref{h_d_mapping_spaces_operads}, \lemref{theta_d_universal_property_operad} (the universal
property of $\theta_{d}$) and \corref{char_of_ess_d-operad} analogously to the proof for $d$-categories.
\end{proof}

We conclude with a simple consequence of the theory of $d$-operads, that showcases the effectiveness of the strict model.
\begin{prop}
\label{prop:Alg_d_Category} Let $\mathcal{O}$ be an $\infty$-operad
and let $\mathcal{U}$ be an (essentially) $d$-operad. The $\infty$-category
$\underline{\alg}_{\mathcal{O}}\left(\mathcal{U}\right)$ is an (essentially)
$d$-category.
\end{prop}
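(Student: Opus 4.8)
The plan is to exploit the strict model directly: since a $d$-operad $\mathcal{U}$ has, by \defref{d_operad}, a total $\infty$-category $\mathcal{U}^{\otimes}$ that is \emph{literally} a $d$-category (for $d\ge1$), I would deduce strict control on the category of algebras from the combinatorics of \defref{d_cat}, rather than computing mapping spaces by hand. The starting observation is that $\underline{\alg}_{\mathcal{O}}(\mathcal{U})$ is a full subcategory of $\fun_{\finpt}(\mathcal{O}^{\otimes},\mathcal{U}^{\otimes})$, which is itself the fibre of $\fun(\mathcal{O}^{\otimes},\mathcal{U}^{\otimes})\to\fun(\mathcal{O}^{\otimes},\finpt)$ over the structure map $\mathcal{O}^{\otimes}\to\finpt$; both passages keep us inside $\fun(\mathcal{O}^{\otimes},\mathcal{U}^{\otimes})$ as a \emph{sub-simplicial set} that happens to be an $\infty$-category. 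The key elementary lemma I would record first is that conditions (1) and (2) of \defref{d_cat} are inherited by any sub-simplicial set which is an $\infty$-category: both are universally quantified implications whose hypotheses and conclusions (agreement of simplices, homotopy rel.\ a boundary, equality) are detected after composing with a monomorphism. Hence it suffices to prove that the ambient functor category is a $d$-category.

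For $d\ge1$ this is then immediate: $\mathcal{U}^{\otimes}$ is a $d$-category by \defref{d_operad}, the class of $d$-categories is stable under the formation of $\fun(K,-)$ for any simplicial set $K$ (which follows from \defref{d_cat} by expressing an $m$-simplex of $\fun(K,\mathcal{D})$ as a map $\Delta^{m}\times K\to\mathcal{D}$; see T.2.3.4), so $\fun(\mathcal{O}^{\otimes},\mathcal{U}^{\otimes})$ is a $d$-category, and the inheritance lemma makes $\underline{\alg}_{\mathcal{O}}(\mathcal{U})$ a $d$-category, in particular an essentially $d$-category. For $d=0$ the same skeleton applies with one extra input: $\mathcal{U}^{\otimes}$ is a $1$-category with $p$ faithful, so $\fun(\mathcal{O}^{\otimes},\mathcal{U}^{\otimes})$ is a $1$-category, and an algebra morphism over $\finpt$ is objectwise a morphism of $\mathcal{U}^{\otimes}$ lying over an \emph{identity} of $\finpt$; faithfulness of $p$ forces such a morphism to be unique when it exists, so $\underline{\alg}_{\mathcal{O}}(\mathcal{U})$ has at most one morphism between any two objects and is therefore an essentially $0$-category. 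The case $d=-1$ is checked directly from the definitions.

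To reach the stated generality I would then upgrade the hypothesis from strict to essential. Given an essentially $d$-operad $\mathcal{U}$, \corref{char_of_ess_d-operad} provides an equivalence $\mathcal{U}\simeq\mathcal{U}'$ with $\mathcal{U}'$ a $d$-operad; since $\underline{\alg}_{\mathcal{O}}(-)$ is a functor of $\infty$-categories and therefore carries equivalences of operads to equivalences of $\infty$-categories, while being an essentially $d$-category is invariant under categorical equivalence, the previous paragraph applies to $\mathcal{U}'$ and transports back to $\mathcal{U}$. Alternatively, one can argue the essential statement uniformly: the mapping space $\map_{\underline{\alg}_{\mathcal{O}}(\mathcal{U})}(A,B)$ is the end over $o\in\mathcal{O}^{\otimes}$ of the fibrewise mapping spaces $\map^{\mathrm{id}}_{\mathcal{U}^{\otimes}}(A(o),B(o))$, each of which is a product of (unary) multi-mapping spaces of $\mathcal{U}$ and hence $(d-1)$-truncated; ends are limits, and $(d-1)$-truncated spaces are closed under limits.

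The point requiring the most care is the bookkeeping of the fibre over the structure map: working with the full functor space $\map_{\fun(\mathcal{O}^{\otimes},\mathcal{U}^{\otimes})}(A,B)$ alone only yields $d$-truncatedness, because the mapping spaces of $\mathcal{U}^{\otimes}$ carry an extra disjoint union indexed by morphisms of $\finpt$. It is precisely the restriction to the identity of $\finpt$ — reflected strictly in the sub-simplicial-set formulation above, and in the superscript $\mathrm{id}$ in the end formula — that recovers the sharp $(d-1)$-truncated bound. The remaining, routine, obstacle is the verification that $d$-categories are stable under $\fun(K,-)$ together with the inheritance lemma for sub-simplicial sets; both are elementary consequences of \defref{d_cat}, but must be stated with attention to the edge cases $d\in\{-1,0\}$.
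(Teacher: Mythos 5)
Your proposal is correct, and its main line coincides with the paper's proof: reduce to the strict case via \corref{char_of_ess_d-operad}, observe that $\underline{\alg}_{\mathcal{O}}\left(\mathcal{U}\right)$ sits inside $\fun\left(\mathcal{O}^{\otimes},\mathcal{U}^{\otimes}\right)$, use stability of $d$-categories under $\fun\left(K,-\right)$ (this is exactly T.2.3.4.8, which the paper simply cites rather than re-derives) together with the inheritance of conditions (1) and (2) of \defref{d_cat} by sub-$\infty$-categories, and settle $d=0$ via faithfulness of the projection to $\finpt$.

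Two points of comparison and caution. First, at $d=0$ the paper replaces $\mathcal{O}$ by $h_{0}\mathcal{O}$ using \lemref{theta_d_universal_property_operad}, so that both operads are skeletal $1$-categories with faithful projections; your componentwise argument (the components of a transformation over $\finpt$ lie over identity morphisms of $\finpt$, and faithfulness of $p$ makes each component unique) avoids this replacement and works. Note, though, that you conclude only ``essentially $0$-category'' where the parenthetical strict claim would ask for a genuine $0$-category; the paper's own proof also stops at ``mapping spaces are empty or singletons'', and the missing skeletality is easily supplied from skeletality of $\mathcal{U}^{\otimes}$ plus faithfulness, but you should be aware the strict conclusion needs this extra line. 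Second, your closing remark that the full functor category ``only yields $d$-truncatedness'' is inaccurate for $d\ge1$: the mapping spaces of $\mathcal{U}^{\otimes}$ are disjoint unions over $\hom_{\finpt}$ of products of multi-mapping spaces, and a disjoint union of $\left(d-1\right)$-truncated spaces is again $\left(d-1\right)$-truncated as soon as $d-1\ge0$, so the restriction to the fibre over $\finpt$ is genuinely needed only at $d=0$ --- which is precisely what the clause ``$p$ is faithful'' in \defref{d_operad} encodes. Relatedly, your alternative end formula is imprecise as stated: the limit runs over the twisted arrow category of $\mathcal{O}^{\otimes}$, whose morphisms lie over arbitrary maps $f$ of $\finpt$, so the terms are the fibres $\map_{\mathcal{U}^{\otimes}}^{f}\left(A\left(o\right),B\left(o'\right)\right)$, i.e.\ products of general (not only unary) multi-mapping spaces of $\mathcal{U}$; the conclusion survives, since all of these are $\left(d-1\right)$-truncated and truncated spaces are closed under limits, but since this sketch is offered only as an alternative it does not affect the validity of your main argument.
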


\begin{proof}
Since an $\infty$-operad $\mathcal{U}$ is an essentially $d$-operad if and only if it is equivalent to a (strict) $d$-operad, it is enough to prove the strict version. 
By definition, the $\infty$-category $\alg_{\mathcal{O}}\left(\mathcal{U}\right)$ is a full subcategory of $\fun\left(\mathcal{O}^{\otimes},\mathcal{U}^{\otimes}\right)$.
For $d\ge1$, the $\infty$-category $\mathcal{U}^{\otimes}$ is a
$d$-category and, therefore, by T.2.3.4.8, the $\infty$-category $\fun\left(\mathcal{O}^{\otimes},\mathcal{U}^{\otimes}\right)$
is a $d$-category as well. Hence, every full subcategory of it is a $d$-category. For $d=0$, by \lemref{theta_d_universal_property_operad}
we can assume that $\mathcal{O}^{\otimes}$ is a $0$-operad as well and
therefore both $\mathcal{O}^{\otimes}$ and $\mathcal{U}^{\otimes}$
are skeletal 1-categories with faithful projection to $\finpt$. Observing
that $\alg_{\mathcal{O}}\left(\mathcal{U}\right)$ is a full subcategory
of $\fun_{\finpt}\left(\mathcal{O}^{\otimes},\mathcal{U}^{\otimes}\right)$
and using the faithfulness of the projections to $\finpt$, we see
that the mapping spaces are either empty or singletons. For $d=-1$,
the claim is obvious.
\end{proof}

\bibliography{Bib_EHA}
\bibliographystyle{alpha}

\end{document}